\def\R{\mathbb{R}}
\def\C{\mathbb{C}}
\def\P{\mathbb{P}}
\def\E{\mathbb{E}}
\def\Z{\mathbb{Z}}
\DeclareMathOperator{\var}{Var}
\DeclareMathOperator{\area}{Area}
\newtheorem{theorem}{Theorem}[section]
\newtheorem{proposition}[theorem]{Proposition}
\newtheorem{lemma}[theorem]{Lemma}
\newtheorem{claim}[theorem]{Claim}
\newtheorem{corollary}[theorem]{Corollary}
\def\eps{\varepsilon}
\def\eqd{\,{\buildrel d \over =}\,}
\title{Double roots of random Littlewood polynomials}
\author{Ron Peled\thanks{School of Mathematical Sciences, Tel Aviv University, Tel Aviv, Israel. E-mail: {\tt peledron@post.tau.ac.il}. Supported by an ISF grant and an IRG grant.}\and Arnab Sen\thanks{Department of Mathematics, University of Minnesota, USA. Email: {\tt arnab@umn.edu}. Supported by NSF grant 1406247.} \and Ofer Zeitouni\thanks{Faculty of Mathematics, Weizmann Institute
    of Science, Israel
    and Courant Institute, New York University, USA.
    Email: {\tt ofer.zeitouni@weizmann.ac.il}. Supported by an ISF grant and by
the Herman P. Taubman professorial chair of Mathematics at WIS.}}
\date{September 4, 2014. Revised January 13, 2015.}
\begin{document}

\maketitle
\begin{abstract}
We consider
random polynomials
whose coefficients are independent and uniform on $\{-1,1\}$. We
prove that the probability that such a polynomial of degree $n$ has
a double root is  $o(n^{-2})$ when $n+1$ is not divisible by $4$ and
asymptotic to $\frac{8\sqrt{3}}{\pi n^2}$ otherwise. This result is
a corollary of a more general theorem that we prove concerning
random polynomials with independent, identically distributed
coefficients having a distribution which is supported on $\{ -1, 0,
1\}$ and whose largest atom is strictly less than $1/\sqrt{3}$.  In
this general case, we prove that the probability of having a double
root equals the probability that either $-1$, $0$ or $1$ are double
roots up to an $o(n^{-2})$ factor and we find the asymptotics of the
latter probability.
\end{abstract}

\section{Introduction}
A \emph{Littlewood polynomial} is a polynomial whose coefficients
are all in $\{-1,1\}$. By a random Littlewood polynomial of degree
$n$ we mean a Littlewood polynomial chosen uniformly among all the
$2^{n+1}$ Littlewood polynomials of degree $n$. In this paper we
investigate the probability that a random Littlewood polynomial has
a double root and show that it is $O(n^{-2})$, and compute it up to an error of
order $o(n^{-2})$.

Our result concerning random Littlewood polynomials is a corollary of
a more general theorem that we now state.
Let $(\xi_j)$, $j\ge 0$, be an independent, identically distributed
sequence of random variables taking values in $\{-1,0,1\}$. Let
$n\ge 1$ and define the random polynomial $P$ by
\begin{equation*}
  P(z):=\sum_{j=0}^n \xi_j z^j.
\end{equation*}
For a
complex number $z$ define the event
\begin{equation*}
  D_z := \{z\text{ is a double root of }P\}.
\end{equation*}
\begin{theorem}\label{thm:double_root}
  If
  \begin{equation}\label{eq:coefficient_condition}
   \max_{x\in\{-1,0,1\}} \P(\xi_0 = x) < \frac{1}{\sqrt{3}}
  \end{equation}
  then
  \begin{equation}\label{eq:thm_conclusion}
    \P(P\text{ has a double root}) = \P(\cup_z D_z)=\P(D_{-1}\cup D_0\cup D_1) +
    o(n^{-2})\quad\text{as $n\to\infty$}.
  \end{equation}
\end{theorem}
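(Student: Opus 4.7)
It suffices to show that $\P(E^*) = o(n^{-2})$, where $E^*$ is the event that $P$ has a double root outside $\{-1, 0, 1\}$. The argument has three ingredients: localize the possible location of such a root, cover the localized region by small discs, and on each disc apply an anti-concentration bound for $(P(z_0), P'(z_0))$ that exploits the hypothesis \eqref{eq:coefficient_condition}. For the localization, since $|\xi_j| \le 1$, standard estimates for $|P(z)|$ at large $|z|$ (leading nonzero term dominates) together with the symmetric argument for the reversed polynomial $z^n P(1/z)$ at small $|z|$ show that with probability $1 - o(n^{-2})$ all roots satisfy $1/R \le |z| \le R$ for a large constant $R$. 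Fixing a small $\delta > 0$, it therefore suffices to bound the probability of a double root in $A := \{1/R \le |z| \le R\} \setminus (B_\delta(-1) \cup B_\delta(0) \cup B_\delta(1))$ and, separately, in each punctured ball $B_\delta(\pm 1) \setminus \{\pm 1\}$.

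For $z_0 \in A$, cover $A$ by discs $D(z_0, \rho)$ of small radius $\rho = \rho(z_0)$; since $|P''(z)| \le n^2 R^n$ deterministically on $\{|z| \le R\}$, a polynomial-size cover suffices. If $P$ has a double root $z^* \in D(z_0, \rho)$, then Taylor expansion at $z^*$ (using $P(z^*) = P'(z^*) = 0$) yields $|P(z_0)| \le C \rho^2 M$ and $|P'(z_0)| \le C \rho M$, where $M := \sup_{D(z_0, 2\rho)} |P''|$. Thus, up to a negligible event on which $M$ is unusually large, the presence of a double root in the disc is contained in a small-ball event for the random vector
\[
(P(z_0), P'(z_0)) = \sum_{j=0}^n \xi_j\,(z_0^j,\,jz_0^{j-1}) \in \C^2 \cong \R^4.
\]
The hypothesis $\max_x \P(\xi_0 = x) < 1/\sqrt{3}$ provides quantitative decay of the characteristic function of $\xi_0$ in an $L^2$ sense; a four-dimensional Esseen/Hal\'asz-type inequality then gives $\P(|P(z_0)| \le \eta_1,\,|P'(z_0)| \le \eta_2) \le C \eta_1^2 \eta_2^2 / \Delta(z_0)$, where $\Delta(z_0)$ is a covariance determinant scaling like $n^2 |z_0|^{4n - 2}$. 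Summing this bound over the cover, with $\rho$ chosen suitably in terms of $|z_0|$, yields $o(n^{-2})$.

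The main obstacle is the analysis in the punctured neighborhoods $B_\delta(\pm 1) \setminus \{\pm 1\}$, where the factor $|z_0|^n$ gives no leverage and the above anti-concentration is too weak. The remedy is to Taylor-expand $P(1 + w) = \sum_{k \ge 0} (P^{(k)}(1)/k!) w^k$: a double root at $1 + w$ with small $w \ne 0$ forces $P(1)$ and $P'(1)$ to satisfy an algebraic relation determined by the higher Taylor coefficients $(P^{(k)}(1))_{k \ge 2}$. One studies the conditional distribution of $(P(1), P'(1))$ given $(P^{(k)}(1))_{k \ge 2}$, and argues, via sharp one-dimensional anti-concentration for the sums $P(1)$ and $P'(1)$, that the event occurs with probability $o(n^{-2})$ conditional on $D_1^c$. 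The argument near $-1$ is analogous, and roots near $0$ (but nonzero) are easily excluded, since they require many leading $\xi_j$ to vanish.
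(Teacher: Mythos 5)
There is a genuine gap, and it sits at the heart of your plan: the uniform small-ball estimate $\P\bigl(|P(z_0)|\le\eta_1,\ |P'(z_0)|\le\eta_2\bigr)\le C\eta_1^2\eta_2^2/\Delta(z_0)$, claimed to follow from a four-dimensional Esseen/Hal\'asz inequality using only the hypothesis $\max_x\P(\xi_0=x)<1/\sqrt3$, is not available and is in fact false at the scales your covering argument needs. Anti-concentration of $\sum_j\xi_j z_0^j$ at fine scales is governed by the arithmetic structure of $z_0$ (this is the content of inverse Littlewood--Offord theory), not by a covariance determinant; the hypothesis on the atoms of $\xi_0$ only controls the characteristic function near the origin, not over the whole torus. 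Concretely: to make the union bound over a cover of the annulus by $\gtrsim\rho^{-2}$ discs yield $o(n^{-2})$, you need each disc to contribute $o(n^{-2}\rho^{2})$, which for any $\rho\le n^{-1}$ is $o(n^{-4})$; but the disc containing $z_0=i$ (or $\zeta_3,\zeta_4,\zeta_6$, none of which is excluded by removing $\pm1,0$) has ``double root in the disc'' probability at least $\P(P(i)=P'(i)=0)\asymp n^{-4}$, since $(P(i),P'(i))$ lives on a lattice and has atoms of that size. So the claimed inequality fails exactly at the structured points, and these few points are only the tip of the iceberg: every root of unity of order growing with $n$, and every low-degree algebraic integer whose conjugates hug the unit circle (the Lehmer-type configurations), sits in some disc and contributes its atom probability rather than the Gaussian-heuristic value, and bounding those atoms is precisely the original problem. (Two further, smaller issues: for real $z_0$ the vector $(P(z_0),P'(z_0))$ is two-dimensional, so the $\eta_1^2\eta_2^2$ form is dimensionally wrong there; and your treatment of the punctured neighborhoods of $\pm1$ --- eliminating $w$ and invoking conditional one-dimensional anti-concentration given $(P^{(k)}(1))_{k\ge2}$ --- is only a sketch, and a curve-length times max-density bound of this type naturally gives $O(n^{-2})$, not $o(n^{-2})$.) Note that the paper's Remark 1 explicitly flags this local-CLT/small-ball heuristic as one that ``seems challenging'' to carry out directly; your proposal is essentially that heuristic.

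The paper avoids the issue by never estimating small-ball probabilities over a continuum: it uses that any double root is an algebraic integer $\alpha$ whose minimal polynomial $g$ satisfies $g^2\mid P$ in $\Z[x]$, hence $g(3)^2\mid P(3)$; since $|g(3)|\ge c(3/2)^{\deg\alpha}$ and, under \eqref{eq:coefficient_condition}, $P(3)$ is unlikely to be divisible by a large square (the ternary-expansion uniqueness argument of Claim~\ref{clm:integer_divisibility}), all roots of degree $\gtrsim\log n$ are discarded (Lemma~\ref{lem:high_degree}). For the remaining $e^{O((\log n)^2)}$ candidates of degree $O(\log n)$, Dobrowolski's theorem splits them into roots of unity --- handled by the S\'ark\"ozy--Szemer\'edi bound of Lemma~\ref{lem:root_of_unity} and a union bound over the $O((\log n\log\log\log n)^2)$ of them --- and numbers with a conjugate at distance $\gtrsim (\log n)^{-1}(\log\log n/\log n)^3$ from the unit circle, killed by the sparsification Lemma~\ref{lem:off_circle} plus the counting Lemma~\ref{lem:number_of_minimal_polynomials}. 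If you try to patch your covering argument by treating the structured discs separately, you are forced to bound $\P(D_\alpha)$ for exactly these algebraic points, i.e.\ to reconstruct the paper's Lemmas~\ref{lem:high_degree}--\ref{lem:off_circle}; as it stands, the proposal does not prove the theorem.
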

Thus, up to a $o(n^{-2})$ factor, the probability of having a double
root is dominated by the probability that either $-1,0$ or $1$ are
double roots. Here and later in the paper we write $o(a_n)$ to
denote a term $\delta_n$, where the sequence $(\delta_n)$ depends
only on the distribution of $\xi_0$ and satisfies $\lim_{n\to\infty}
\delta_n/a_n = 0$. Similarly, $\delta_n=O(a_n)$ means that
$\limsup_{n\to\infty} |\delta_n|/a_n<\infty$.

Our next theorem calculates the asymptotics of the double root
probability.
\begin{theorem}\label{thm:double_root_asymptotics}
    Assume condition~\eqref{eq:coefficient_condition}. First,
  \begin{equation}\label{eq:double_root_prob_limit}
     \lim_{n\to\infty} \P(P\text{ has a double root}) = \P(\xi_0 = 0)^2.
  \end{equation}
  Second, if
  \begin{equation}\label{eq:no_atom_at_0}
    \P(\xi_0=0)=0
  \end{equation}
  then
  \begin{equation}\label{eq:double_root_prob_asymptotics}
    \P(P\text{ has a double root}) = \frac{L_n}{n^2} + o(n^{-2})
    \quad\text{as $n\to\infty$},
  \end{equation}
  where $L_n$ denotes the periodic sequence
  \begin{equation}\label{eq:limit_constant_value}
    L_n := \begin{cases}
        \frac{8\sqrt{3}}{\pi \var(\xi_0)}& \text{if $\E(\xi_0) = 0$
             and $n+1$ is divisible by $4$},\\
        \frac{4\sqrt{3}}{\pi \var(\xi_0)}&\text{if
        $\E(\xi_0) \neq 0$  and $n+1$ is divisible by
      $4$},\\
      0&\text{if $n+1$ is not divisible by $4$}.
    \end{cases}
  \end{equation}
\end{theorem}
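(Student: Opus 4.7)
Write $\mu := \E\xi_0$ and $\sigma^2 := \var(\xi_0)$, and use Theorem~\ref{thm:double_root} to reduce to computing $\P(D_{-1} \cup D_0 \cup D_1)$ up to $o(n^{-2})$. Immediately $D_0 = \{\xi_0 = \xi_1 = 0\}$, so $\P(D_0) = \P(\xi_0 = 0)^2$. The events $D_1$ and $D_{-1}$ are instead joint hitting events of a two-dimensional integer-valued random walk with independent but non-identically distributed steps: $D_1 = \{S_n = 0,\, T_n = 0\}$ with $S_n := \sum \xi_j$ and $T_n := \sum j \xi_j$, while $D_{-1} = \{\sum (-1)^j \xi_j = 0,\, \sum j (-1)^{j-1} \xi_j = 0\}$.

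For~\eqref{eq:double_root_prob_limit} the crude bound $\P(D_{\pm 1}) = o(1)$ suffices: $\P(P(\pm 1) = 0) = O(n^{-1/2})$ in the unbiased case by a univariate local CLT (with~\eqref{eq:coefficient_condition} controlling the characteristic function) and is exponentially small in the biased case. Combined with $\P(D_0) = \P(\xi_0 = 0)^2$ this gives $\P(\text{double root}) = \P(\xi_0 = 0)^2 + o(1)$.

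For the quantitative estimate~\eqref{eq:double_root_prob_asymptotics} assume $\P(\xi_0 = 0) = 0$, so $\xi_j \in \{-1, 1\}$ and $\P(D_0) = 0$. The argument then has four substeps. \emph{(i) Parity.} Each of $S_n, T_n, P(-1), P'(-1)$ lies in a prescribed coset of $2\Z$, and a short check shows that both $(S_n, T_n) = (0, 0)$ and $(P(-1), P'(-1)) = (0, 0)$ are impossible unless $n+1 \equiv 0 \pmod 4$; this accounts for the third line of~\eqref{eq:limit_constant_value}. \emph{(ii) 2D local CLT.} When $n+1 \equiv 0 \pmod 4$, both pairs take values on cosets of $2\Z \times 2\Z$ (fundamental cell of volume $4$). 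A local CLT gives
\[
\P(D_{\pm 1}) = \frac{4}{2\pi \sqrt{\det \Sigma_{\pm 1}}} \exp\!\bigl(-\tfrac{1}{2} m_{\pm 1}^T \Sigma_{\pm 1}^{-1} m_{\pm 1}\bigr) + o(n^{-2}),
\]
where $\Sigma_{\pm 1}$ and $m_{\pm 1}$ denote the covariance matrix and mean of the respective pair. A direct calculation yields $\det \Sigma_{\pm 1} = \sigma^4 n(n+1)^2(n+2)/12$, so the prefactor tends to $4\sqrt{3}/(\pi \sigma^2 n^2)$. \emph{(iii) Gaussian factor.} For $D_{-1}$ (using $n$ odd) the mean is $m_{-1} = (0, \mu(n+1)/2)$, and the inverse-covariance computation collapses the quadratic form to $3\mu^2 (n+1)/(\sigma^2 n(n+2)) \to 0$, so the Gaussian factor equals $1 + o(1)$ for every $\mu$ and $\P(D_{-1}) \sim 4\sqrt{3}/(\pi \sigma^2 n^2)$. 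For $D_1$ the mean $((n+1)\mu, n(n+1)\mu/2)$ yields the simplified quadratic form $(n+1)\mu^2/\sigma^2$; so the Gaussian factor is $1$ when $\mu = 0$ (giving $\P(D_1) \sim 4\sqrt{3}/(\pi \sigma^2 n^2)$), while a standard Cramér/Hoeffding bound shows $\P(D_1) \le \P(S_n = 0) = \exp(-\Theta(n))$ when $\mu \ne 0$. \emph{(iv) Intersection.} $\P(D_1 \cap D_{-1})$ is a 4D lattice hitting event and is $O(n^{-4}) = o(n^{-2})$ by a four-dimensional local CLT. Summing by inclusion-exclusion reproduces the three regimes of~\eqref{eq:limit_constant_value}.

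The principal obstacle is step (ii): establishing a 2D local CLT for sums $\sum_j \xi_j v_j$ of independent bounded integer vectors with non-identically distributed weights (here $v_j = (1, j)$ or $v_j = ((-1)^j, j(-1)^{j-1})$), with a remainder uniformly $o(n^{-2})$ that must remain valid even at points which in the biased direction lie $\Theta(\sqrt{n})$ standard deviations from the mean (as happens for $(P(-1), P'(-1))$ when $\mu \ne 0$). The standard Fourier inversion on the dual torus applies: hypothesis~\eqref{eq:coefficient_condition} ensures $|\E e^{it \xi_0}|$ is bounded away from $1$ outside a small neighborhood of the origin, which makes the non-Gaussian portion of the inversion integral exponentially small, while the Gaussian behavior near the origin is extracted to the required precision. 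The algebraic simplifications that reduce both quadratic forms in step (iii) to such clean expressions, together with the parity bookkeeping of step (i), are what ultimately split the limit constant $L_n$ into its three regimes.
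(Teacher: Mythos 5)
Your outline follows essentially the same route as the paper: reduction via Theorem~\ref{thm:double_root}, the parity bookkeeping \eqref{eq:parity_restrictions}, the exponential bound on $\P(P(1)=0)$ when $\E\xi_0\neq 0$, a two-dimensional local CLT on the lattice $2\Z\times 2\Z$ for the $\pm1$ events, and a negligible intersection term; your algebra ($\det\Sigma=\sigma^4 n(n+1)^2(n+2)/12$, the quadratic forms $3\mu^2(n+1)/(\sigma^2 n(n+2))$ at $-1$ and $(n+1)\mu^2/\sigma^2$ at $+1$) is correct and matches the paper's \eqref{eq:mu_and_sigma_def}. The cosmetic differences (the paper handles $D_1$ in the unbiased case via the identity $(P(1),P'(1))\eqd(P(-1),P'(-1))$ rather than a second LCLT, and bounds $\P(D_1\cap D_{-1})$ by splitting $\{P'(1)=P'(-1)=0\}$ into independent odd/even events and applying Corollary~\ref{cor:Sarkozy_Szemeredi}, giving $O(n^{-3})$ with no 4D LCLT needed) are immaterial.

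The genuine gap is the step you yourself call the principal obstacle: the 2D local CLT with additive error $o(n^{-2})$. Your justification --- that \eqref{eq:coefficient_condition} makes $|\E e^{it\xi_0}|$ bounded away from $1$ outside a neighborhood of the origin, so the non-Gaussian part of the inversion integral is exponentially small --- does not work as stated. First, for $\xi_0\in\{-1,1\}$ the one-dimensional characteristic function has modulus $1$ at every multiple of $\pi$, so it is only bounded away from $1$ off a neighborhood of a lattice, not of the origin. More importantly, the quantity to control is the product $\prod_{j}\bigl|p e^{2\pi i x_j}+(1-p)\bigr|$ with phases $x_j=2(\theta_1\pm j\theta_2)$: individual factors can equal $1$ for a large set of indices $j$ even when $\theta$ is far from the origin of the dual torus $[-\tfrac14,\tfrac14]^2$ (e.g.\ when $2\theta_2$ is close to a rational with small denominator), so one must prove that for every such $\theta$ a positive proportion of the $x_j$ stay at distance $\gtrsim n^{-5/12}$ from $\Z$. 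This requires the arithmetic argument in part~\ref{it:X_far_from_zero} of Proposition~\ref{prop:6.2} (the pigeonhole step producing $j_1,j_2,j_1+j_2\in J$ and the counting of $\{j: d(2j\theta_2,\Z)>\tfrac12 n^{-5/12}\}$); it is the technical heart of the proof and cannot be dismissed as standard. Two smaller slips: $\P(P(-1)=0)$ is \emph{not} exponentially small when $\E\xi_0\neq0$ (it is $\Theta(n^{-1/2})$, which still suffices for \eqref{eq:double_root_prob_limit}), and it is $(P(1),P'(1))$, not $(P(-1),P'(-1))$, whose mean is order $\sqrt n$ standard deviations from the origin in the biased case --- for $(P(-1),P'(-1))$ the origin is within $o(1)$ Mahalanobis distance of the mean, which is exactly why the LCLT yields the asymptotics there while the biased $D_1$ case must be handled by the large-deviation bound, as you do.
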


We make a few remarks regarding the theorems.
\begin{enumerate}
    \item  The event that $P$ possesses a double root is the same as the
        event that $P$ and $P'$ have a common root, which necessarily
    must lie in the annulus ${\cal A}=\{1/2\leq |z|\leq 2\}$ or at $0$.
    Since the
    correlation coefficient between $P(z)$ and $P'(z)$ is
    bounded away from $1$
    as $n\to \infty$ uniformly in ${\cal A}$,
    a natural heuristic is that
        the probability that $P$ possesses a double root
        is up to a multiplicative constant
    asymptotically the same as the probability that $P$ and an
        independent copy of $P'$ possess a common root, which by local
        CLT considerations and some analysis should be at most
    of order
        $n^{-2}$
    when $\P(\xi_0=0)=0$ (in case one considers $P$  and
    an independent  copy $\tilde{P}$ of $P$, such an analysis was
        carried out in \cite{KZ13}). Directly carrying out this heuristic seems, however, challenging.
\item  We do not know if condition \eqref{eq:coefficient_condition},
or a condition of a similar kind, is necessary for the conclusion
\eqref{eq:thm_conclusion} of Theorem
    \ref{thm:double_root} to hold; the theorem does cover the
    interesting cases where the distribution of the
    coefficients $\xi_i$ is uniform on
  all three of $\{-1,0,1\}$ or uniform on any two of these values.
  See the open problems section for further information.

  \item In case $n+1$ is not divisible by $4$, our results for $\P(\xi_0=0)=0$
    do not yield the
      leading term in the asymptotic expansion of the left side
      of \eqref{eq:thm_conclusion}; by parity consideration,
      in that situation,
      $\pm1$ cannot be a
      double root of $P$. In that situation, one needs to
      consider also roots of unity  of algebraic degree larger than $1$. The
      asymptotics
      then depend on further arithmetic properties of $n$. While
      our methods could in principle be adapted to yield such results,
      we do not attempt to do so.
      We note, however, that under certain number theoretic
  assumptions, there exist infinitely many $n$ for which the
  polynomial $P$ is deterministically irreducible, indeed, even the
  deterministic polynomial $P\bmod 2$ is irreducible mod $2$, see
  \cite{MO09}.

  \item Our methods could also be used in evaluating the
           probability that $P$ possesses a root of multiplicity $k$. We
           expect that under the condition
           \eqref{eq:coefficient_condition}, the probability of
           having a root of multiplicity $k \ge 2$ (fixed) equals
           $\P(\text{either $-1, 0$ or $+1$ is a root of order of } k)
           + o(n^{-k^{2}/2})$.
           We have, however, not verified the details
           of this assertion.
           Note that, as described in the next remark, it is known
           \cite{FL99} that the probability that $1$ is a root of
           multiplicity $k$ is of order $O(n^{-k^2/2})$ for random Littlewood
           polynomials.
  \item When dealing with random Littlewood polynomials and when $n+1$ is divisible by $4$, the asymptotic
      probability that $-1$ or $1$ are double roots of $P$ is already known
      and has an interesting history which we briefly sketch. It
      suffices, as one may check simply (see
      \eqref{eq:1_and_minus_1_equality_in_distribution} and
      \eqref{eq:simultaneous_double_root}), to show that
      \begin{equation}
          \label{eq-of1}
     \P(P(1)=P'(1)=0) = \frac{4\sqrt{3}}{\pi n^2} +
  o(n^{-2}).\end{equation}
  That is, one needs to count the number of $\pm 1$
  sequences $\{a_i\}_{i=0}^n$
  such that $\sum_{i=0}^n a_i=0$ and $\sum_{i=1}^{n}  i a_i=0$.
  Setting $b_i=a_{i-1}$, this is the same as counting the number of
  solutions of the system of equations $\sum_{i=1}^{n+1} b_i=0$ and
  $\sum_{i=1}^{n+1} ib_i=0$, with $b_i\in \{-1,1\}$.
  The latter is a quantity appearing in coding theory, namely, the number
  of spectral-null codes of second order and length $n+1$,
  denoted ${\cal S}(n+1,2)$, which was
  evaluated (non rigorously, and with a slightly different motivation) already
  in \cite{SR}, and rigorously in \cite{FL99}.
  Both derivations start from the substitution
  $X_i=(b_i+1)/2$ to show that
  ${\cal S}(n+1,2)$ equals the number of partitions with distinct parts
  of $(n+1)(n+2)/4$
  into $(n+1)/2$ parts with largest part at most $n+1$.
  The authors in \cite{FL99} then derive  a local CLT, which implies the
  required asymptotics.
  Our proof proceeds
  with a somewhat different approach to the local CLT, using some ideas from
  \cite{KLP13}.
  \end{enumerate}

\subsection{Overview of the proof of Theorem \ref{thm:double_root}}
Recall that the minimal polynomial of an algebraic integer $\alpha$
is the monic polynomial  in $\mathbb{Z}[x]$ of least degree such
that $\alpha$ is a root of that polynomial. We denote by
$\deg(\alpha)$ the algebraic degree of an algebraic integer
$\alpha$, i.e., the degree of its minimal polynomial.

The first and perhaps most crucial step of our argument  is the following lemma which allows us to discard the algebraic integers with sufficiently high degrees.
The proof of the lemma is based on an
idea appearing in a work of Filaseta and Konyagin \cite{FK96}.
\begin{lemma}\label{lem:high_degree}(High degree)
  Under the assumption \eqref{eq:coefficient_condition} there exist
  constants $C,c>0$ such that for any $1\le d\le n$,
  \begin{equation*}
    \P(P\text{ has a double root $\alpha$ with $\deg(\alpha)\ge
    d$})\le C\exp(-cd).
  \end{equation*}
\end{lemma}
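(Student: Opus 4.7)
The plan is to reduce, via Gauss's lemma, to bounding the probability that $P$ is divisible by $h^2$ for a fixed monic integer polynomial $h$, to estimate that probability by polynomial long division, and to control the resulting sum over~$h$.

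\emph{Reduction and per-$h$ bound.} If $P$ has a double root $\alpha$ with $\deg(\alpha)=k\ge d$, let $m_\alpha\in\Z[x]$ be the monic minimal polynomial of $\alpha$. Since $m_\alpha$ is irreducible over $\mathbb{Q}$ and $\alpha$ is a double root of $P$, $m_\alpha^2$ divides $P$ in $\mathbb{Q}[x]$, and by Gauss's lemma (together with monicity of $m_\alpha$) also in $\Z[x]$. Every root of $m_\alpha$ is a root of $P$, and the standard Cauchy bound for $\{-1,0,1\}$-coefficient polynomials places all roots of $P$ in $\{1/2\le|z|\le 2\}$. Writing $\mathcal{H}_k$ for the set of monic integer polynomials of degree $k$ with all roots in this annulus, a union bound gives
\begin{equation*}
  \P(P\text{ has a double root of degree }\ge d)\le\sum_{k\ge d}\sum_{h\in\mathcal{H}_k}\P(h^2\mid P).
\end{equation*}
For fixed $h\in\mathcal{H}_k$, polynomial long division (valid because $h^2$ is monic) yields unique $Q,R\in\Z[x]$ with $\deg R<2k$ and $P=h^2Q+R$; the event $h^2\mid P$ is $R=0$. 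Carrying out the division from the top down, each coefficient of $Q$ becomes an explicit $\Z$-linear function of $(\xi_{2k},\ldots,\xi_n)$ with coefficients depending on $h$, and $R=0$ then forces $(\xi_0,\ldots,\xi_{2k-1})$ to equal a specific integer vector determined by $h$ and by $(\xi_{2k},\ldots,\xi_n)$. Independence of the $\xi_j$'s and the bound $\P(\xi_0=x)\le p_{\max}:=\max_{y\in\{-1,0,1\}}\P(\xi_0=y)$ valid for every $x$ then yield
\begin{equation*}
  \P(h^2\mid P)\le p_{\max}^{2k}<3^{-k},
\end{equation*}
the strict inequality coming from hypothesis \eqref{eq:coefficient_condition}.

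\emph{Counting, refinement, and main obstacle.} By Vieta's formulas with $|\alpha_i|\le 2$, $|h_j|\le\binom{k}{j}2^{k-j}$ for $h\in\mathcal{H}_k$, so $|\mathcal{H}_k|\le\exp(Ck^2)$ for some absolute $C$. The naive product $|\mathcal{H}_k|\cdot p_{\max}^{2k}$ therefore grows in $k$, so the per-$h$ estimate must be sharpened. Following Filaseta--Konyagin \cite{FK96}, the bound $p_{\max}^{2k}$ is essentially attained only when all $2k$ integer values forced on $(\xi_0,\ldots,\xi_{2k-1})$ happen to lie in $\{-1,0,1\}$, an event whose probability becomes exponentially small as $\|h\|_\infty$ grows. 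Stratifying $\mathcal{H}_k$ by $\|h\|_\infty$ and applying a Hal\'asz-type anti-concentration inequality within each stratum, one proves $\sum_{h\in\mathcal{H}_k}\P(h^2\mid P)\le C'\cdot 3^{-k}$; summing the geometric series in $k\ge d$ produces the required $C\exp(-cd)$. The delicate step, and the main obstacle, is the quantitative anti-concentration estimate; this is precisely where the hypothesis \eqref{eq:coefficient_condition}, with its specific threshold $1/\sqrt{3}$, is used to ensure that the base decay $p_{\max}^{2k}<3^{-k}$ is strong enough to overcome the enumeration of $\mathcal{H}_k$ after the refinement.
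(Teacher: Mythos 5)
Your opening reduction is sound and matches the paper's: by Gauss's lemma the square of the (monic) minimal polynomial divides $P$ in $\Z[x]$, and the division argument giving $\P(h^2\mid P)\le p_{\max}^{2k}<3^{-k}$ for each fixed monic $h$ of degree $k$ is correct. But the proof stops exactly where the real difficulty begins. The union bound over $\mathcal{H}_k$ has $\exp(Ck^2)$ terms against a per-term bound of only $3^{-k}$, and your proposed repair --- stratify $\mathcal{H}_k$ by $\|h\|_\infty$ and invoke ``a Hal\'asz-type anti-concentration inequality within each stratum'' to get $\sum_{h\in\mathcal{H}_k}\P(h^2\mid P)\le C'3^{-k}$ --- is an assertion, not an argument. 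You never specify which random sum the anti-concentration is applied to, why the probability decays with $\|h\|_\infty$ fast enough to beat the number of polynomials in each stratum (the forced values of $(\xi_0,\dots,\xi_{2k-1})$ depend on the random top coefficients, so ``the forced values leave $\{-1,0,1\}$'' is not a deterministic statement about $h$), or how the stated threshold $1/\sqrt 3$ would emerge from such a scheme. You yourself label this ``the main obstacle,'' so the lemma is not proved.

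The Filaseta--Konyagin idea, which you cite but do not actually use, avoids summing over polynomials altogether: it converts polynomial divisibility into \emph{integer} divisibility by evaluating at $3$. From $g^2\mid P$ in $\Z[x]$ one gets $g(3)^2\mid P(3)$, and a Jensen-formula argument (Claim~\ref{clm:jensen}) shows all but a bounded number $M$ of conjugates satisfy $|\beta|\le 3/2$, whence $|g(3)|=\prod_\beta|3-\beta|\ge c_1(3/2)^d$. So the event forces $P(3)$ to be divisible by $k^2$ for some integer $k\ge c_1(3/2)^d$, and now the union bound runs over integers $k$, not over $\exp(Ck^2)$ polynomials. The anti-concentration needed is elementary: conditioning on $\xi_r,\dots,\xi_n$ with $3^r\le k^2<3^{r+1}$, the uniqueness of the balanced ternary expansion gives $\P(k^2\mid P(3))\le p_{\max}^{\,r}\le 3k^{-\gamma}$ with $\gamma=-\log p_{\max}/\log\sqrt3$, and the hypothesis $p_{\max}<1/\sqrt3$ is used precisely to make $\gamma>1$ so that $\sum_{k\ge B}k^{-\gamma}\lesssim B^{-(\gamma-1)}$ converges --- not, as you suggest, to make $p_{\max}^{2k}$ beat an enumeration of minimal polynomials. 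To complete your proof you would need to either carry out this evaluation-at-$3$ argument or supply in full the stratified anti-concentration estimate you invoke; as written, the decisive step is missing.
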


As we are aiming for an error of size $o(n^{-2})$, as in
\eqref{eq:thm_conclusion}, the lemma allows us to restrict attention
to algebraic integers $\alpha$ with $\deg(\alpha) = O(\log n)$. We
shall
then make use of Dobrowolski's result on Lehmer's conjecture
\cite{D79}  to further restrict attention to two cases: the case
when $\alpha$ is a root of unity or $\alpha  = 0$ and the case when
there is a conjugate $\beta$ of $\alpha$ such that $\beta$ lies a
bit far away
 from the unit circle, more precisely
 $|\beta| > 1 + \frac{c}{\log n} \big(\frac{\log\log n}{\log n}\big)^3$.
The first case is addressed in the following lemma whose proof relies on a classical anti-concentration result of S\'ark\"ozi and
Szemer\'edi \cite{SS65}.
\begin{lemma}\label{lem:root_of_unity}(Roots of unity)
  Under the assumption \eqref{eq:coefficient_condition} there exists
  a constant $C>0$ such that if $\alpha$ satisfies $\alpha^k = 1$ for some $k\ge 1$ then
  \begin{equation*}
    \P(\alpha\text{ is a root of $P'$})\le
    \left(\frac{C}{\lfloor\frac{n}{k}\rfloor}\right)^{\frac{3\deg(\alpha)}{2}}.
  \end{equation*}
\end{lemma}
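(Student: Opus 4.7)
The plan is to decompose $P'(\alpha)$ by residue classes modulo $k$, reduce the equation $P'(\alpha)=0$ to $d=\deg(\alpha)$ linear constraints on ``block sums'' that decouple into independent groups of coefficients, and then apply the S\'ark\"ozi--Szemer\'edi anti-concentration inequality once per block.

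Since $\alpha^k=1$, the values $\alpha^{j-1}$ depend only on $(j-1)\bmod k$, so writing $j-1=qk+s$ with $0\le s\le k-1$ yields
\begin{equation*}
 P'(\alpha)=\sum_{s=0}^{k-1}\alpha^s T_s,\qquad T_s:=\sum_{\substack{q\ge 0\\ qk+s+1\le n}}(qk+s+1)\,\xi_{qk+s+1}.
\end{equation*}
The blocks $T_0,\ldots,T_{k-1}$ depend on disjoint subsets of $\{\xi_j\}$ and are therefore mutually independent, and each is a signed sum of $\asymp\lfloor n/k\rfloor$ i.i.d.\ $\{-1,0,1\}$-valued variables whose coefficients $(qk+s+1)_q$ are distinct positive integers in arithmetic progression.

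Let $\Phi\in\Z[x]$ be the minimal polynomial of $\alpha$; since $\Phi$ is monic of degree $d$, polynomial long division produces integers $c_{s,i}$ with $x^s\equiv\sum_{i=0}^{d-1}c_{s,i}x^i\pmod{\Phi(x)}$, and in particular $c_{s,i}=\delta_{s,i}$ whenever $s<d$. Because $\{1,\alpha,\ldots,\alpha^{d-1}\}$ is $\Z$-linearly independent, the identity $\sum_s\alpha^s T_s=0$ is equivalent to the triangular system
\begin{equation*}
 T_i=-\sum_{s=d}^{k-1}c_{s,i}\,T_s,\qquad i=0,1,\ldots,d-1.
\end{equation*}
Conditioning on the ``free'' blocks $T_d,\ldots,T_{k-1}$ (equivalently, on all $\xi_j$ with $(j-1)\bmod k\ge d$) fixes the right-hand sides to prescribed integers $\gamma_0,\ldots,\gamma_{d-1}$, and by the independence noted above
\begin{equation*}
 \P(P'(\alpha)=0)\le\prod_{i=0}^{d-1}\sup_{c\in\Z}\P(T_i=c).
\end{equation*}

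The final ingredient is the S\'ark\"ozi--Szemer\'edi anti-concentration estimate $\sup_c\P(T_i=c)\le C\lfloor n/k\rfloor^{-3/2}$, applied to each of the $d$ factors; multiplying then yields exactly the claimed bound $(C/\lfloor n/k\rfloor)^{3d/2}$. The principal point to check is that this $N^{-3/2}$ estimate is indeed available in our setting: the classical result of \cite{SS65} is stated for symmetric $\pm1$ signs on distinct integer weights, whereas here $\xi_0$ is supported on $\{-1,0,1\}$, and the threshold $1/\sqrt 3$ in \eqref{eq:coefficient_condition} is exactly what is needed to keep $|\E e^{it\xi_0}|$ sufficiently small on the torus so that the standard Hal\'asz-type Fourier argument still delivers $N^{-3/2}$ for coefficients in arithmetic progression. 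Verifying this extension of S\'ark\"ozi--Szemer\'edi under \eqref{eq:coefficient_condition} is the one non-routine step of the plan.
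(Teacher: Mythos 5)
Your argument is essentially the paper's: it too splits $P'(\alpha)$ according to the residue of $j-1$ modulo $k$, conditions on the blocks whose residue is at least $d=\deg(\alpha)$, uses the linear independence of $1,\alpha,\ldots,\alpha^{d-1}$ over $\Q$ to see that, given that conditioning, the event $P'(\alpha)=0$ forces each of the remaining $d$ independent integer block sums to hit one prescribed value, and then applies an anti-concentration bound of S\'ark\"ozi--Szemer\'edi type to each block, each of which has at least $\lfloor n/k\rfloor$ distinct integer coefficients. Your explicit reduction of $\alpha^s$ modulo the minimal polynomial is just a more explicit form of the same decoupling, and that part of your write-up is correct. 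The one step you leave open --- the bound $\sup_c\P(\sum_j \xi_j a_j=c)\le C N^{-3/2}$ for distinct integers $a_j$ and $\xi_j$ supported on $\{-1,0,1\}$ --- is exactly Corollary~\ref{cor:Sarkozy_Szemeredi} in the paper, and it is proved there not by a Fourier/Hal\'asz computation but by a simple coupling: since the largest atom of $\xi_0$ is at most $1/\sqrt3<1$, the law of $\xi_0$ is a mixture which with probability $p>1-1/\sqrt{3}$ samples a \emph{symmetric} two-point distribution on two of the values $-1,0,1$; conditioning on the (binomially distributed) set of indices where the symmetric component was sampled reduces matters to the original theorem of \cite{SS65}, up to an exponentially small binomial deviation term. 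Two cautions about your justification: first, the claim that $1/\sqrt{3}$ is ``exactly what is needed'' for this anti-concentration step is misleading --- any hypothesis keeping the largest atom bounded away from $1$ suffices (with $C$ then depending on the law of $\xi_0$, as constants do throughout the paper); the constant $1/\sqrt{3}$ is really used elsewhere, in the ternary-expansion argument behind Claim~\ref{clm:integer_divisibility} and Lemma~\ref{lem:high_degree}. Second, the phrase ``keep $|\E e^{it\xi_0}|$ sufficiently small on the torus'' cannot be taken literally: e.g.\ in the Littlewood case $|\E e^{i\pi\xi_0}|=1$, so there is no uniform bound away from $t\in 2\pi\Z$, and one genuinely needs the S\'ark\"ozi--Szemer\'edi (or Hal\'asz-type, exploiting distinctness of the $a_j$) machinery rather than a crude characteristic-function bound. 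With that step supplied, your proof is complete and coincides with the paper's.
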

Using Lemma~\ref{lem:root_of_unity}, we will show that if $\alpha$
is a root of unity with $\deg(\alpha) = O(\log n)$ and $\deg(\alpha)
\ge 2$, then $ \P(\alpha\text{ is a root of $P'$}) \le
O\left(\left(\frac{\log n\log\log \log n}{n}\right)^{3}\right)$.
Since there are not many such roots of unity, in fact $O\big((\log n
\log \log \log n)^2\big)$ of them, a simple union bound implies that
\[  \P( P' \text{ has a root $\alpha$ such that $\alpha$ is a root of unity  and } 2 \le \deg(\alpha) = O(\log n)) = o(n^{-2}).\]
Finally, we deal with the second case as follows. We will show that
the probability that $\alpha$ is a root of $P$ decreases very
rapidly with the distance of $\alpha$ from the unit circle.
\begin{lemma}\label{lem:off_circle}(Far from the unit circle)
  Under the assumption \eqref{eq:coefficient_condition}, for any algebraic integer $\alpha\neq 0$,
  \begin{equation*}
    \P(\alpha\text{ is a root of $P$})\le e^{-\frac{n\log 3}{2\lceil \log 3 /|\log
  |\alpha||\rceil}}.
  \end{equation*}
\end{lemma}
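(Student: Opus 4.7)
\emph{Proof plan.}
The plan is to combine a reciprocal-polynomial reduction with a block-decomposition anti-concentration argument, in the spirit of Filaseta--Konyagin~\cite{FK96}. The bound is trivial when $|\alpha|=1$, so assume $|\alpha|\neq 1$. One may then reduce to the case $|\alpha|>1$: the reciprocal polynomial $P^*(z):=z^nP(1/z)=\sum_{j=0}^n\xi_{n-j}z^j$ is distributionally identical to $P$ by the i.i.d.\ assumption, and $\alpha\neq 0$ is a root of $P$ iff $1/\alpha$ is a root of $P^*$. Since the right-hand side of the claimed bound depends only on $|\log|\alpha||$, which is invariant under $\alpha\mapsto 1/\alpha$, this reduction is without loss of generality.

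Assuming $|\alpha|>1$, set $k:=\lceil\log 3/\log|\alpha|\rceil$, so that $|\alpha|^k\geq 3$, and let $m:=\lfloor(n+1)/k\rfloor$. Partition $\{0,1,\ldots,n\}$ into $m$ consecutive blocks of length $k$ (plus a remainder of length $<k$) and write
\[
P(\alpha)=\sum_{i=0}^{m-1}\alpha^{ik}Q_i+R,\qquad Q_i:=\sum_{j=0}^{k-1}\xi_{ik+j}\alpha^j,
\]
where $R$ is the remainder contribution and $Q_0,\ldots,Q_{m-1},R$ are independent. A direct per-block anti-concentration bound is available: for any $c\in\mathbb{C}$, conditioning on $\xi_{ik+1},\ldots,\xi_{ik+k-1}$ reduces the event $\{Q_i=c\}$ to the event that $\xi_{ik}$ takes one specific value, yielding $\sup_{c\in\mathbb{C}}\P(Q_i=c)\leq p$, where $p:=\max_{x\in\{-1,0,1\}}\P(\xi_0=x)<1/\sqrt{3}$ by~\eqref{eq:coefficient_condition}.

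The crux is to amplify this single-block bound into a compound bound of the form $\P(P(\alpha)=0)\leq 3^{-m/2}$, which suffices to yield the claim since $m\geq (n+1)/k-1$. I would proceed by a high-index-down inductive peel-off: conditioning on $Q_{m-1}$ and eliminating it via the equation $P(\alpha)=0$ reduces the problem to an analogous $(m-1)$-block problem with a modified remainder $R+\alpha^{(m-1)k}Q_{m-1}$, and the scale separation $|\alpha|^k\geq 3$ is preserved. The main obstacle is that a single conditioning step yields only one factor of $p$; genuinely gaining all $m$ factors requires exploiting the independence of the blocks together with $|\alpha|^{-k}\leq 1/3$ to decouple the residual constraint after each peel-off, likely via a careful inductive bookkeeping or via a Fourier-side argument on the dual of the lattice $\mathbb{Z}[\alpha]\cong\mathbb{Z}^{\deg(\alpha)}$.
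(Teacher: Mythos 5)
There is a genuine gap, and it is exactly at the point you flag as ``the crux.'' Your reduction to $|\alpha|>1$ and the choice $k=\lceil \log 3/\log|\alpha|\rceil$ with $|\alpha|^k\ge 3$ match the paper, but the amplification from a single-block atom bound $p=\max_x\P(\xi_0=x)$ to a product bound $p^{\,\Theta(n/k)}$ is the entire content of the lemma, and the mechanism you propose does not deliver it. The event $\{P(\alpha)=0\}$ is a single linear constraint; in your peel-off, conditioning on $Q_{m-1}$ gains no factor of $p$ at all (it only shifts the target value for the remaining blocks), while using the per-block bound to pin $Q_{m-1}$ consumes the constraint, so the induction as described yields one factor of $p$, not $m$ of them. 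Nor can you argue injectivity of the map $(q_0,\dots,q_{m-1})\mapsto\sum_i\alpha^{ik}q_i$ on whole-block values: a block value $Q_i=\sum_{j<k}\xi_{ik+j}\alpha^j$ can be as large as $(|\alpha|^k-1)/(|\alpha|-1)$, which vastly exceeds $|\alpha|^k$ when $|\alpha|$ is close to $1$ (the relevant regime, e.g.\ $|\alpha|=1+O(1/\log n)$), so the ``digits'' are far too large for the base-$\alpha^k$ expansion to be unique, and no $\P(\sum_i\alpha^{ik}Q_i=z)\le\prod_i\max_c\P(Q_i=c)$ follows.

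The paper's resolution is a sparsification rather than a block decomposition: keep as free variables only the coefficients $\xi_{ik}$ at indices divisible by $k$ (one per block), and condition on all the remaining coefficients, i.e.\ write $P=P_1+P_2$ with $P_1(z)=\sum_{i=0}^{\lfloor n/k\rfloor}\xi_{ik}z^{ik}$. Then $P_1(\alpha)=\sum_i\xi_{ik}(\alpha^k)^i$ is an honest base-$\alpha^k$ expansion with digits in $\{-1,0,1\}$ and $|\alpha^k|\ge 3$, so the coefficient-to-value map is injective (the leading nonzero digit dominates, exactly as in the ternary-expansion argument~\eqref{eq:ternary_expansion}); hence $\max_{z\in\C}\P(P_1(\alpha)=z)\le p^{\lfloor n/k\rfloor+1}$, and conditioning on the independent $P_2(\alpha)$ gives $\P(P(\alpha)=0)\le p^{\lfloor n/k\rfloor+1}<(1/\sqrt3)^{\lfloor n/k\rfloor+1}\le e^{-n\log 3/(2k)}$. (A minor additional point: even granting your target bound $3^{-m/2}$ with $m=\lfloor (n+1)/k\rfloor$, the inequality $m\ge (n+1)/k-1$ loses a constant factor relative to the stated exponent $n\log 3/(2k)$; using the exponent $\lfloor n/k\rfloor+1\ge n/k$, as above, avoids this.) If you want to salvage your block picture, the fix is precisely to shrink each block's contribution to a single $\{-1,0,1\}$ digit so that the scale separation $|\alpha|^k\ge 3$ forces uniqueness.
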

The proof of the above lemma is elementary and is based on a
sparsification argument. We shall apply the lemma for $\alpha$
satisfying $|\alpha| > 1 + \frac{c}{\log n} \big(\frac{\log\log
n}{\log n}\big)^3$. Since there are only $\exp(O((\log n)^2))$
potential roots of $P$ with algebraic degree $O(\log n)$ (see
Lemma~\ref{lem:number_of_minimal_polynomials}), a union bound yields
an error estimate of $o(n^{-2})$ for the second case too. Therefore,
we conclude that the probability that $P$ has a double root is the
same as the probability that $P$ has a double root at $-1, 0$ or $1$
up to an error of $o(n^{-2})$.

\subsection{Structure of the paper}
Section \ref{sec:high_alg_degree} is dedicated to handling roots of
high algebraic degrees, i.e., to the proof of Lemma
\ref{lem:high_degree}. Section \ref{sec:roots_unity} treats roots of
unity and provides the proof of Lemma \ref{lem:root_of_unity}.
Section \ref{sec:far_roots} handles roots that are far away from the
unit circle, providing the proof of Lemma \ref{lem:off_circle}.
Section \ref{sec:double_root} is dedicated to the deduction of
Theorem~\ref{thm:double_root}. Section \ref{sec:A_double_root} is
dedicated to the local CLT and proof of Theorem
\ref{thm:double_root_asymptotics}. The paper ends with a few open
questions.
\subsection{Acknowledgements}
We are grateful to Van Vu for mentioning the relevance of Lehmer's conjecture and Hoi H. Nguyen for the reference to Filaseta and
Konyagin \cite{FK96}.  Also, AS is indebted to Manjunath Krishnapur for many insightful discussions.

\section{High algebraic degree}
\label{sec:high_alg_degree}

In this section we prove Lemma~\ref{lem:high_degree}. We start with
two preliminary claims.

\begin{claim}\label{clm:jensen}
There exists a constant $M$ such that for any $n \ge 1$ and any
non-zero polynomial $f$ of the form $f(z) = \sum_{i=0}^n a_i z^i$
with $a_i \in \{-1,0,1\}$ for all $0 \le i \le n$,  the number of
$z\in\C$ for which $f(z) = 0$ and $|z|\ge\frac{3}{2}$ is at most
$M$.
\end{claim}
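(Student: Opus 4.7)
The plan is to apply Jensen's formula to a suitable reciprocal polynomial. The key observation is that a root $\alpha$ of $f$ with $|\alpha|\ge 3/2$ corresponds to a small root (of modulus $\le 2/3$) of the reciprocal polynomial, whose constant term is $\pm 1$; such small roots can be uniformly bounded by Jensen.

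First I would normalize. Let $k$ and $m$ be the smallest and largest indices with $a_k,a_m\neq 0$, and write $f(z)=z^k h(z)$ where $h(z)=\sum_{j=0}^{m-k} a_{k+j} z^j$ has $h(0)=a_k\in\{\pm 1\}$ and leading coefficient $a_m\in\{\pm 1\}$. The roots of $f$ with $|z|\ge 3/2$ are precisely the roots of $h$ in that region. Define the reciprocal polynomial $g(z):=z^{m-k} h(1/z)$, which again has coefficients in $\{-1,0,1\}$, degree $m-k$, and constant term $g(0)=a_m$, so $|g(0)|=1$. A root $\alpha$ of $h$ with $|\alpha|\ge 3/2$ corresponds, with multiplicity preserved, to a root $1/\alpha$ of $g$ in the closed disk $|z|\le 2/3$.

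Next I would apply Jensen's formula to $g$ on the disk of radius $R:=3/4$. Since $|g(0)|=1$,
\begin{equation*}
0 \;=\; \log|g(0)| \;=\; \sum_{j:\,|\gamma_j|<R} \log\frac{|\gamma_j|}{R} \;+\; \frac{1}{2\pi}\int_0^{2\pi} \log|g(Re^{i\theta})|\, d\theta,
\end{equation*}
where the sum runs over the roots $\gamma_j$ of $g$ (with multiplicity) in the open disk of radius $R$. For $|z|=R<1$, the triangle inequality combined with $|a_i|\le 1$ gives $|g(z)|\le \sum_{i=0}^{\infty} R^i = 1/(1-R)=4$, so the integral is at most $\log 4$. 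Each root $\gamma_j$ with $|\gamma_j|\le 2/3$ lies strictly inside the disk and contributes $\log(R/|\gamma_j|)\ge \log(9/8)>0$ in the rearranged identity
\begin{equation*}
\sum_{j:\,|\gamma_j|<R} \log\frac{R}{|\gamma_j|} \;=\; \frac{1}{2\pi}\int_0^{2\pi} \log|g(Re^{i\theta})|\, d\theta \;\le\; \log 4.
\end{equation*}
Thus, if $N$ denotes the number of such roots (equivalently, the number of roots of $f$ with $|z|\ge 3/2$, counted with multiplicity), then $N\log(9/8)\le \log 4$, so one may take $M:=\lfloor \log 4/\log(9/8)\rfloor$.

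The only point that needs a moment of care is the boundary behavior in Jensen's formula (roots of $g$ exactly on $|z|=R$), but since we only count roots with $|\gamma_j|\le 2/3<R$, these lie strictly inside the open disk and are automatically included in the sum; any roots in the annulus $2/3<|\gamma_j|<R$ only make the sum larger, which is harmless for our upper bound. I do not anticipate any serious obstacle: the argument is essentially a one-page application of Jensen, and the role of the hypothesis $a_i\in\{-1,0,1\}$ is solely to ensure both $|g(0)|=1$ and the uniform bound $|g(z)|\le 1/(1-R)$ on $|z|=R$.
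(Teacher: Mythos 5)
Your proof is correct and follows essentially the same route as the paper: pass to the reciprocal polynomial (whose constant term is $\pm 1$), bound it on a circle of radius $R<1$ by $1/(1-R)$, and apply Jensen's formula so that each root of modulus $\le 2/3$ contributes at least a fixed positive amount. The only differences are cosmetic — the paper does not strip the factor $z^k$ (it is unnecessary, though harmless) and optimizes over the radius, taking $r=0.82$ instead of your $R=3/4$.
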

\begin{proof}
Assume, without loss of generality, that  $|a_n| = 1$. Let $\tilde
f(z) = z^n f(z^{-1}) = \sum_{i=0}^n a_i z^{n-i}$ be the reciprocal
polynomial of $f$. Denote by $N(f)$ the number of $z\in\C$ for which
$f(z) = 0$ and $|z|\ge\frac{3}{2}$. Then $N(f)$ is also the number
of $z\in\C$ for which $\tilde f(z)=0$ and $|z|\le \frac{2}{3}$.
Noting that $|\tilde{f}(0)|=1$ we may apply Jensen's formula (see,
e.g., \cite[Chapter 5.3.1]{A78}) and obtain for any $r> \frac{2}{3}$
that
\begin{equation*}
  \max_{0\le \theta\le 2\pi} \log|\tilde{f}(re^{i\theta})| \ge
  \frac{1}{2\pi}\int_0^{2\pi} \log|\tilde{f}(re^{i\theta})|d\theta =
  \log|\tilde{f}(0)| + \sum_{z\colon \tilde{f}(z)=0,\, |z|\le r} \log\left(\frac{r}{|z|}\right) \ge N(f)
  \log\left(\frac{r}{2/3}\right).
\end{equation*}
Observe that when $r<1$ we have $|\tilde{f}(re^{i\theta})|\le
\frac{1}{1-r}$ for all $\theta$. Thus
\begin{equation*}
  N(f)\le \frac{1}{(1-r)\log(3r/2)},\quad \frac{2}{3}<r<1
\end{equation*}
and substituting $r=0.82$, say, yields that $N(f)\le 26$, finishing
the proof.
\end{proof}
\begin{claim}\label{clm:integer_divisibility}
  Let $P$ be the random polynomial from
  Theorem~\ref{thm:double_root} and assume
  \eqref{eq:coefficient_condition}. There exist constants $C, c>0$
  such that for any $B>0$ we have
  \begin{equation*}
    \P\left(\text{P(3) is divisible by $k^2$ for some integer $k\ge B$}\right)\le C
    B^{-c}.
  \end{equation*}
\end{claim}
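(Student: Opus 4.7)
The plan is to establish the pointwise bound
\[
\P(k^2 \mid P(3)) \;\leq\; C\,\frac{3^{v_3(k)}}{k^2}
\]
for every $k\geq 1$ and then sum this estimate over $k\geq B$. Writing $k = 3^a k'$ with $\gcd(k',3)=1$, the condition $k^2\mid P(3)$ splits, via the Chinese remainder theorem, into $3^{2a}\mid P(3)$ and $(k')^2\mid P(3)$. For the $3$-part I would exploit that every integer in $(-3^{2a}/2,3^{2a}/2)$ has a unique balanced-ternary expansion with digits in $\{-1,0,1\}$, so (provided $2a\leq n+1$) the event $3^{2a}\mid P(3)$ coincides with $\xi_0=\cdots=\xi_{2a-1}=0$ and thus has probability at most $(\max_x \P(\xi_0=x))^{2a}\leq 3^{-a}$ by \eqref{eq:coefficient_condition}. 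Conditionally on that event, $P(3)/3^{2a}$ is distributed as $\tilde P(3)$ for a random polynomial $\tilde P$ of degree $n-2a$ with the same coefficient law, reducing the problem to controlling $\P((k')^2\mid \tilde P(3))$.

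The technical heart is then to show that $\P(N^2\mid Q(3))\leq C/N^2$ for every $N$ coprime to $3$ and every random polynomial $Q$ of degree at least $C_0\log N$ with i.i.d.\ coefficients satisfying \eqref{eq:coefficient_condition}. I would prove this by Fourier inversion: with $\phi(t)=\E\bigl[e^{2\pi i t \xi_0}\bigr]$,
\[
\P(Q(3)\equiv 0 \bmod N^2) \;=\; \frac{1}{N^2}\sum_{\ell=0}^{N^2-1}\prod_{j=0}^{\deg Q}\phi\!\Bigl(\frac{3^j \ell}{N^2}\Bigr).
\]
The $\ell=0$ term contributes $1/N^2$, so I need the total contribution of $\ell\neq 0$ to be $O(1/N^2)$. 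Under \eqref{eq:coefficient_condition} the Fourier transform satisfies $|\phi(t)|\leq 1-c\,\dist(t,S)^2$ for a finite set $S\subseteq\{0,1/2\}$, and because $\gcd(3,N)=1$ the orbit $\{3^j\ell \bmod N^2\}_{j\geq 0}$ has size $\mathrm{ord}_{N^2}(3)$. A counting argument on this orbit shows that a positive proportion of the fractional parts $3^j\ell/N^2$ lies outside any fixed neighborhood of $S$, so each non-trivial product decays exponentially in $\deg Q$ and the $\ell\neq 0$ terms sum to $O(1/N^2)$ once $\deg Q$ exceeds a constant multiple of $\log N$.

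Combining the two steps yields $\P(k^2\mid P(3))\leq C\cdot 3^{v_3(k)}/k^2$, and writing $k = 3^a k'$ the sum over $k\geq B$ becomes
\[
\sum_{a\geq 0}3^{-a}\sum_{\substack{k'\geq \max(1,B/3^a)\\ \gcd(k',3)=1}}\frac{1}{(k')^2} \;=\; O\!\Bigl(\frac{\log B}{B}\Bigr),
\]
which is $O(B^{-c})$ for any $c<1$. The main obstacle will be the Fourier estimate of the middle paragraph: when $\mathrm{ord}_{N^2}(3)$ is small, the orbit of $\ell$ could in principle concentrate near $S$. I would handle this by stratifying the Fourier sum according to $d = \gcd(\ell,N^2)$, reducing to the primitive case where $\ell/N^2$ is in lowest terms and the orbit of $3$ generates as large a subgroup of $(\mathbb{Z}/(N^2/d))^\ast$ as possible; a separate (easier) estimate bounding $\P(P(3)=0)$ by $(\max_x \P(\xi_0=x))^{n+1}$ deals with those $k$ so large that $k^2$ exceeds the range of $|P(3)|$.
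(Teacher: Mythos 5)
Your outer structure (split off the $3$-part by balanced ternary uniqueness, then sum $3^{v_3(k)}/k^2$ over $k\ge B$) is fine, but the technical heart — the Fourier bound $\P(N^2\mid Q(3))\le C/N^2$ for $\gcd(N,3)=1$ once $\deg Q\ge C_0\log N$ — has a genuine gap. The claim that ``a positive proportion of the fractional parts $3^j\ell/N^2$ lies outside any fixed neighborhood of $S$'' is false when the multiplicative order of $3$ modulo $m:=N^2/\gcd(\ell,N^2)$ is small. Take $m=(3^t-1)/2$ and $\ell$ reducing to $\ell'=1$: the orbit of $3$ mod $m$ is $\{1,3,9,\dots,3^{t-1}\}$, of period $t\asymp\log m$, and all but $O(1)$ of these points lie inside any fixed neighborhood of $0$. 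Each period therefore contributes only $O(1)$ to $\sum_j \dist(3^j\ell/N^2,S)^2$, so the corresponding product is only $\exp\bigl(-c\,\deg Q/\log N\bigr)$; with $\deg Q\asymp\log N$ this is a constant, nowhere near the $O(N^{-2})$ (or even $o(1)$) you need for the $\ell\neq 0$ terms to sum to $O(1)$. To push such a product below $m^{-2}$ you would need $\deg Q\gtrsim(\log N)^2$, and that is not available in the regime that actually matters: the claim is applied with $B$ exponentially large in $n$, so you must handle $k$ with $\sqrt{n}\lesssim\log k\lesssim n$, for which $(\log k)^2\gg n\ge\deg Q$ while $k^2$ is still far below the range of $|P(3)|$ — so neither your Fourier lemma nor your trivial ``$k^2$ exceeds the range'' fallback covers them. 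Your proposed repair (stratifying by $d=\gcd(\ell,N^2)$) does not help, because the bad example above is already primitive; the obstruction is genuinely the small order of $3$, not a hidden common factor.

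For comparison, the paper's proof avoids equidistribution and Fourier analysis altogether, and in effect applies your $3$-adic trick to all of $k^2$ rather than only to its $3$-part: for each $k$ it takes $r$ with $3^r\le k^2<3^{r+1}$, conditions on $\xi_r,\dots,\xi_n$, and uses the uniqueness of the balanced ternary expansion to bound every point probability of $\sum_{j<r}\xi_j3^j$ by $\bigl(\max_x\P(\xi_0=x)\bigr)^r$. Hypothesis \eqref{eq:coefficient_condition} then gives $\P(k^2\mid P(3))\le 3k^{-\gamma}$ with $\gamma>1$, and summing over $k\ge B$ yields the claim; the exponent $\gamma>1$, though weaker than your targeted $2$, is all that is needed. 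If you replace your Fourier step by this conditioning/uniqueness argument for the full modulus $k^2$, your write-up collapses to the paper's proof; as it stands, the middle step does not go through.
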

\begin{proof}
  Let $k\ge 1$ be an integer and let $r$ be the integer satisfying
  $3^r\le k^2<3^{r+1}$. By conditioning on $\xi_r, \xi_{r+1}, \ldots, \xi_n$ we have
\begin{align}\label{eq:P3ad}
\P\left(P(3) \bmod k^2 = 0\right) \le \max_{m\in \mathbb Z} \P\left(
\sum_{j=0}^{r-1} \xi_j 3^j \bmod k^2 = m\right) = \max_{m \in
\mathbb Z} \P\left(\sum_{j=0}^{r-1} \xi_j 3^j  = m\right),
\end{align}
where the last equality follows from the fact that
$\left|\sum_{j=0}^{r-1}\xi_j 3^j\right|\le\frac{1}{2}(3^r-1)$
deterministically and $k^2\ge 3^r$ by the definition of $r$. Write
$\max_{x\in\{-1,0,1\}} \P(\xi_0 = x) = \frac{1}{\sqrt{3}} -
\delta\in\left[\frac{1}{3},\frac{1}{\sqrt{3}}\right)$ by the
assumption \eqref{eq:coefficient_condition}. Observe that
\begin{equation}\label{eq:ternary_expansion}
  \text{the mapping $(a_0,\ldots, a_{r-1})\mapsto \sum_{j=0}^{r-1}
  a_j 3^j$ is one-to-one on $\{-1,0,1\}^r$}
\end{equation}
as the ternary expansion of an integer is unique. Thus,
\begin{equation}\label{eq:P3bd}
 \max_{m \in
\mathbb Z} \P\left(\sum_{j=0}^{r-1} \xi_j 3^j  = m\right) \le
\left(\frac{1}{\sqrt{3}} - \delta\right)^r.
 \end{equation}
Combining \eqref{eq:P3ad} and \eqref{eq:P3bd} with the fact that
$r>\frac{2\log k}{\log3} - 1$ we deduce that
\[  \P\left(P(3) \bmod k^2 = 0\right)  \le 3\left(\frac{1}{\sqrt{3}} - \delta\right)^{\tfrac{2 \log k}{\log 3}} = 3k^{-\gamma},\]
where $\gamma :=  - \log\left(\frac{1}{\sqrt{3}} - \delta\right) /
\log \sqrt 3
>1$. Summing over $k \ge B$ we obtain
\begin{equation*}
  \P\left(\text{P(3) is divisible by $k^2$ for some integer $k\ge B$}\right)  \le C B^{-(\gamma -
  1)},
  \end{equation*}
for some suitable constant $C>0$, as required.
\end{proof}

We now complete the proof of Lemma~\ref{lem:high_degree}.
Let $P$ be the random polynomial from
  Theorem~\ref{thm:double_root} and assume
  \eqref{eq:coefficient_condition}. Fix $1 \le d \le n$. Let
  $\alpha$ be an algebraic integer of degree $d$ with (monic) minimal
  polynomial $g$. Denote by $C(\alpha)$ the set of
  algebraic conjugates of $\alpha$ (i.e., the set of roots of
  $g$). Suppose that $\alpha$ is a double root of $P$.
  Then, necessarily $g$ divides $P$ and therefore
  $|\beta|\le 2$ for all $\beta\in C(\alpha)$ and, by
Claim~\ref{clm:jensen}, all but at most $M$ of the $\beta\in
C(\alpha)$ satisfy
$|\beta|\le \frac{3}{2}$. We conclude that
\begin{equation*}
  |g(3)| = \prod_{\beta\in C(\alpha)} |3-\beta| \ge (\tfrac{3}{2})^{d - M}  \ge c_1 (\tfrac{3}{2})^{d},
\end{equation*}
where $c_1 := (\tfrac{3}{2})^{-M}>0$. In addition, the facts that
$\alpha$ is a double root of $P$  and that  $\alpha$ cannot be a
multiple root of $g$, since in that case $\alpha$ is also a root of
$g'$ violating the minimality of $g$, imply that $g^2$ divides $P$
in $\mathbb{Z}[x]$ so that, in particular, the integer $P(3)$ is
divisible by $g(3)^2$. Putting the above facts together we arrive at
the inclusion of events
\begin{equation*}
  \left\{\text{$\alpha$ is a double root of $P$}\right\}\subseteq\left\{\text{$P(3)$
  is divisible by $k^2$ for some integer $k\ge
  c_1(\tfrac{3}{2})^d$}\right\}.
\end{equation*}
Lemma~\ref{lem:high_degree} now follows from
Claim~\ref{clm:integer_divisibility}.

\section{Roots of unity}
\label{sec:roots_unity}
In this section we prove Lemma~\ref{lem:root_of_unity}. We make use
of the following anti-concentration result of S\'ark\"ozi and
Szemer\'edi \cite{SS65}.
\begin{theorem}\label{thm:Sarkozy_Szemeredi}
  Let $(\eps_j)$, $1\le j\le N$, be independent random variables
  with $\P(\eps_j = 0) = \P(\eps_j = 1) = \frac{1}{2}$. There exists
  a constant $C>0$ such that for any \emph{distinct} integers $(a_j)$, $1\le j\le N$, we have
  \begin{equation*}
    \max_{m\in\Z} \P\left(\sum_{j=1}^N \eps_j a_j = m\right) \le
    \frac{C}{N^{3/2}}.
  \end{equation*}
\end{theorem}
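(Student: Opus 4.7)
The plan is to establish this anti-concentration bound by the Fourier-analytic (Halász-type) method. Starting from the Fourier inversion identity
\[
\P\!\left(\sum_{j=1}^N \eps_j a_j = m\right) = \int_{-1/2}^{1/2} \prod_{j=1}^N \E[e^{2\pi i \eps_j a_j t}]\, e^{-2\pi i m t}\,dt
\]
together with $|\E[e^{2\pi i \eps_j a_j t}]| = |\cos(\pi a_j t)|$ (which follows from $\E[e^{2\pi i \eps_j a_j t}]=e^{\pi i a_j t}\cos(\pi a_j t)$), I would first obtain the uniform bound
\[
\max_{m\in\Z} \P\!\left(\sum_{j=1}^N \eps_j a_j = m\right) \;\le\; \int_{-1/2}^{1/2} \prod_{j=1}^N |\cos(\pi a_j t)|\,dt.
\]
Applying the pointwise estimate $|\cos(\pi x)| \le \exp(-c\|x\|^2)$, where $\|x\|$ is the distance from $x$ to the nearest integer and $c>0$ is absolute, reduces the task to showing that
\[
\int_{-1/2}^{1/2} \exp\!\Big(-c\sum_{j=1}^N \|a_j t\|^2\Big)\,dt = O(N^{-3/2}).
\]

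The heuristic for this bound is that since the $a_j$ are distinct integers, sorting by absolute value gives $\sum_j a_j^2 \gtrsim N^3$, so for $t$ near $0$ the exponent is $\gtrsim N^3 t^2$ and the resulting Gaussian integral is already of size $N^{-3/2}$; the genuine complication is that the integrand regenerates a comparable peak near every rational $p/q$ with small $q$. The cleaner route I would take is Halász's expansion: using the further inequality $\|a_j t\|^2 \ge (1-\cos(2\pi a_j t))/(2\pi^2)$, write
\[
\prod_{j=1}^N |\cos(\pi a_j t)| \;\le\; e^{-c_1 N}\,\exp\!\Big(c_2\sum_{j=1}^N \cos(2\pi a_j t)\Big),
\]
power-series-expand the right-hand exponential, and integrate term by term. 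By orthogonality of the complex exponentials on $[0,1]$, each $k$-th series coefficient reduces to the combinatorial count
\[
N_k \;:=\; \#\!\left\{(i_1,\ldots,i_k)\in[N]^k,\ (\sigma_\ell)\in\{\pm 1\}^k : \sum_{\ell=1}^k \sigma_\ell a_{i_\ell} = 0\right\}.
\]

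The main obstacle is producing a sharp-enough bound on $N_k$ from the distinctness hypothesis. For the base case $k=2$, any solution must either have $i_1=i_2$ with $\sigma_1=-\sigma_2$, or satisfy $a_{i_1}=-a_{i_2}$; the latter is constrained by distinctness of the $a_j$'s as integers to at most $O(N)$ pairs, so $N_2 = O(N)$. An inductive argument on $k$ then produces $N_{2k} \le (CN)^k k!$, which combined with the $e^{-c_1 N}$ prefactor and an optimized truncation of the series yields the claimed $O(N^{-3/2})$. The improvement over Erdős's classical $O(N^{-1/2})$ Littlewood--Offord bound (valid for merely nonzero $a_j$, via Sperner's theorem) comes precisely from the $N^{-1}$-savings in $N_2$ that distinctness provides, and propagating this savings through the inductive step and the series summation is the crux of the Sárközi--Szemerédi refinement.
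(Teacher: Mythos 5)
The paper itself offers no proof of this statement; it is quoted directly from S\'ark\"ozi--Szemer\'edi \cite{SS65}. So your argument has to stand on its own, and as written it does not. The Fourier set-up is fine: the reduction to $\int_{-1/2}^{1/2}\prod_j|\cos(\pi a_j t)|\,dt$, the pointwise bound $|\cos(\pi x)|\le e^{-c\,d(x,\Z)^2}$, the comparison with $e^{-c_1N}\exp\bigl(c_2\sum_j\cos(2\pi a_jt)\bigr)$, and the identity $\int_0^1\bigl(\sum_j\cos(2\pi a_jt)\bigr)^k dt=2^{-k}N_k$ are all correct. The gap is in the combinatorial estimate that carries the whole load: the claimed bound $N_{2k}\le (CN)^k k!$ is false for every fixed $k\ge 2$. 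Take $a_j=j$: the solutions of $a_{i_1}+a_{i_2}-a_{i_3}-a_{i_4}=0$, i.e.\ $i_1+i_2=i_3+i_4$ in $[N]^4$, already number on the order of $N^3$, so $N_4\gtrsim N^3$, not $O(N^2)$. What distinctness actually gives is only $N_k\le 2^kN^{k-1}$ (fix all indices and signs but the last; the last term is then a prescribed integer, determined up to at most two choices), and this is essentially sharp.

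Worse, even with the correct bound your summation scheme cannot reach $N^{-3/2}$. Any pointwise inequality $\prod_j|\cos(\pi a_jt)|\le e^{-c_1N}\exp\bigl(c_2\sum_j\cos(2\pi a_jt)\bigr)$ forces $c_2\ge c_1$ (evaluate at $t=0$), and then term-by-term integration with $N_k\le 2^kN^{k-1}$ gives $e^{-c_1N}\sum_k\frac{(c_2/2)^k}{k!}N_k\le \frac1N e^{(c_2-c_1)N}\ge\frac1N$ at best: the series is dominated by $k$ of order $N$, the exponential prefactor is exactly used up, and the route stalls at $O(N^{-1})$. The missing factor $N^{-1/2}$ is precisely what Hal\'asz's argument supplies through an ingredient absent from your sketch: a level-set/sumset step. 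One bounds $T(r):=\{t:\sum_j d(a_jt,\Z)^2\le r\}$ by combining a single fixed moment (e.g.\ $N_4=O(N^3)$, which via a Markov-type argument gives $|T(cN)|=O(1/N)$ for a suitable constant $c$) with the facts that the $k$-fold sumset of $T(\delta)$ modulo $1$ is contained in $T(k^2\delta)$ and that a $k$-fold sumset on the torus has measure at least $\min(1,k|T(\delta)|)$; choosing $k\asymp\sqrt{N/\delta}$ yields $|T(\delta)|=O(\sqrt{\delta}\,N^{-3/2})$, and integrating $e^{-r}$ against these level-set measures gives the theorem. Equivalently one can invoke Hal\'asz's concentration inequality with a fixed $k$, or the original combinatorial proof of \cite{SS65}; in any case a genuinely new idea beyond orthogonality plus the $N_2=O(N)$ count is required, and it is exactly at the step you yourself label the crux that the argument is not valid as stated.
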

Clearly, by a linear change of variable, the theorem continues to
hold when $\P(\eps_j = a) = \P(\eps_j = b) = \frac{1}{2}$ for any
$\{a,b\}\subset\Z$. The following corollary extends this to our
non-symmetric setting.
\begin{corollary}\label{cor:Sarkozy_Szemeredi}
  Let $(\xi_j)$ be as in Theorem~\ref{thm:double_root}. There exists a constant $C>0$
  such that for any \emph{distinct} integers $(a_j)$, $1\le j\le N$, we have
  \begin{equation*}
    \max_{m\in\Z} \P\left(\sum_{j=1}^N \xi_j a_j = m\right) \le
    \frac{C}{N^{3/2}}.
  \end{equation*}
\end{corollary}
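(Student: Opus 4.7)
The plan is to reduce the (possibly asymmetric) three-valued setting of the corollary to the symmetric two-valued setting already covered by Theorem~\ref{thm:Sarkozy_Szemeredi} (in the form noted immediately after its statement). The reduction is a mixture decomposition of the law of $\xi_0$.

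First I would pick two distinct values $u,v \in \{-1,0,1\}$ in the support of $\xi_0$; these exist because assumption~\eqref{eq:coefficient_condition} forces every atom to have mass strictly less than $1/\sqrt{3}<1$. Setting $p := \min\bigl(\P(\xi_0 = u),\P(\xi_0 = v)\bigr) > 0$ (and noting $2p \le p_u + p_v \le 1$), I can realise each $\xi_j$ as
\[
  \xi_j \eqd I_j B_j + (1-I_j) R_j,
\]
where $I_j \sim \Bin(1,2p)$, $B_j$ is uniform on $\{u,v\}$, and $R_j$ is a $\{-1,0,1\}$-valued variable whose law is chosen so that the marginal of $\xi_j$ is correct, with all triples $(I_j,B_j,R_j)$ independent across $j$.

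Next, letting $J := \{j : I_j = 1\}$, I would condition on the $\sigma$-field generated by $(I_j,R_j)_{j=1}^N$. Under the conditioning,
\[
  \sum_{j=1}^N \xi_j a_j \;=\; c \;+\; \sum_{j \in J} B_j a_j,
\]
where $c := \sum_{j \notin J} R_j a_j$ is a constant. Since $(a_j)_{j=1}^N$ consists of distinct integers, so does $(a_j)_{j \in J}$, and $B_j$ is symmetric Bernoulli on the two integer values $\{u,v\}$. Hence the extension of Theorem~\ref{thm:Sarkozy_Szemeredi} via the linear change mapping $\{u,v\}$ to $\{0,1\}$ gives
\[
  \P\!\left(\sum_{j \in J} B_j a_j = m - c \,\Bigm|\, (I_j,R_j)_{j=1}^N\right) \;\le\; \frac{C_0}{|J|^{3/2}} \qquad \text{whenever } |J| \ge 1.
\]

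Finally I would integrate out the conditioning. Since $|J| \sim \Bin(N,2p)$, a Chernoff bound yields $\P(|J| < pN) \le e^{-c_1 N}$ and $\P(|J|=0) = (1-2p)^N$, so
\[
  \P\!\Bigl(\sum_j \xi_j a_j = m\Bigr) \;\le\; C_0\,\E\!\left[|J|^{-3/2} \one_{|J| \ge 1}\right] + \P(|J| = 0) \;\le\; C_0(pN)^{-3/2} + O(e^{-c_1 N}) \;=\; O(N^{-3/2}),
\]
uniformly in $m$, which is the desired bound. There is no serious obstacle; the only delicate point is guaranteeing the existence of two atoms in the support of $\xi_0$ to feed into the mixture, and this is automatic from \eqref{eq:coefficient_condition}.
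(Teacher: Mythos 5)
Your proposal is correct and follows essentially the same route as the paper: decompose the law of $\xi_0$ as a mixture containing a symmetric two-point component on distinct integer values, condition on the random set of indices where that component is used, apply Theorem~\ref{thm:Sarkozy_Szemeredi} (after the linear change of variables) to that subset, and finish with a Chernoff/binomial concentration bound on its size. The only differences from the paper's proof are cosmetic, such as your choice of mixture weight $p=\min(\P(\xi_0=u),\P(\xi_0=v))$ versus the paper's weight exceeding $1-\tfrac{1}{\sqrt 3}$, which is immaterial since the constant may depend on the law of $\xi_0$.
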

\begin{proof}
  Using the assumption \eqref{eq:coefficient_condition} there exists
  some $p> \left(1 - \frac{1}{\sqrt{3}}\right)$,
  $a\neq b\in \{-1,0,1\}$ and a random variable
  $\eta$ supported in $\{-1,0,1\}$ such that if we let $\eps$ be uniform on $\{a,b\}$
  then the distribution of $\xi_1$ has the distribution of the mixture obtained by
  sampling $\eps$ with probability $p$ and sampling $\eta$ with
  probability $1-p$. Let $(\eps_j)$, $(\eta_j)$, $j\ge 1$, be
  independent identically distributed sequences with the
  distributions of $\eps$ and $\eta$ respectively.
  Independently, couple each
  $\xi_j$ to $(\eps_j, \eta_j)$ as the above mixture. Let $T$ be the
  random set of indices $1\le j\le N$ in which we sampled $\eps_j$
  to obtain $\xi_j$. Thus $T$ is distributed as a binomial with
  $N$ trials and success probability $p$. Using
  Theorem~\ref{thm:Sarkozy_Szemeredi} we have
  \begin{align*}
    \max_{m\in\Z} \P\left(\sum_{j=1}^N \xi_j a_j = m\right) &= \max_{m\in\Z}
    \E\P\left(\sum_{j=1}^N \xi_j a_j = m\,\Big|\,T, (\xi_j)_{j\notin T}\right)\le \E\max_{m\in\Z}\P\left(\sum_{j\in T} \xi_j
    a_j = m\,\Big|\,T\right) =\\
    &= \E\max_{m\in\Z}\P\left(\sum_{j\in T} \eps_j a_j =
    m\,\Big|\,T\right) \le \E\left(\min\left(1,\frac{C}{|T|^{3/2}}\right)\right).
  \end{align*}
    It remains to note that by standard concentration estimates for
  binomial random variables there exists some universal constant
  $c>0$ for which $\P(|T| \le \frac{1}{2} Np) \le \exp(-cNp)$. Thus
  \begin{equation*}
    \E\left(\min\left(1,\frac{C}{|T|^{3/2}}\right)\right) \le
    \exp(-cNp) + \frac{C}{\left(\frac{1}{2}Np\right)^{3/2}}.\qedhere
  \end{equation*}
\end{proof}

We complete now the proof of Lemma \ref{lem:root_of_unity}.
 Let $\alpha$ be such that $\alpha^k = 1$ for some $k\ge 1$.
 Observe that necessarily $\deg(\alpha)\le k$. Set
\begin{align*}
  J&:=\{j\colon 1\le j\le n\text{ and }0\le (j -1) \bmod k\le
  \deg(\alpha) -1\},\\
  \bar{J}&:=\{1,\ldots, n\}\setminus J.
\end{align*}
Define the random variables $(S_r)$, $0\le r\le\deg(\alpha) - 1$, by
\begin{equation*}
  S_r := \sum_{\substack{j\in J\\
  j-1\bmod k = r}} \xi_j j \alpha^{j-1} = \alpha^r \sum_{\substack{j\in J\\
  j - 1\bmod k = r}} \xi_j j
\end{equation*}
and
\begin{equation*}
  \bar{S} := \sum_{j\in \bar{J}} \xi_j j
  \alpha^{j-1}.
\end{equation*}
Observe that
\begin{equation*}
  P'(\alpha)=\sum_{j=1}^n \xi_j j \alpha^{j-1} = \sum_{j\in J} \xi_j j
  \alpha^{j-1} + \sum_{j\in \bar{J}} \xi_j j
  \alpha^{j-1} = \sum_{r=0}^{\deg(\alpha)-1} S_r + \bar{S}.
\end{equation*}
Now, by definition, $(S_r)$, $0\le r\le\deg(\alpha)-1$, are
independent and also independent of $\bar{S}$. In addition,
$(\alpha^r)$, $0\le r\le \deg(\alpha)-1$, are linearly independent
over the rational numbers, and therefore the equation
$\sum_{i=0}^{\deg(\alpha)-1} a_i \alpha^i=z$
has at most one integral solution $(a_0,\ldots,a_{\deg(\alpha)-1})$
for
a given
$z\in \C$. Hence
\begin{align*}
  \P(P'(\alpha)=0) &= \E\P\left(\sum_{r=0}^{\deg(\alpha)-1} S_r=-\bar{S}\,\Big|\,\bar{S}\right)
  \le \max_{z\in\C} \P\left(\sum_{r=0}^{\deg(\alpha)-1} S_r = z\right) =\\
  &= \prod_{r=0}^{\deg(\alpha)-1}\max_{z\in\C} \P(S_r = z) =
  \prod_{r=0}^{\deg(\alpha)-1}\max_{m\in\Z} \P\Bigg(\sum_{\substack{j\in J\\
  j-1\bmod k = r}} \xi_j  j = m\Bigg).
\end{align*}
Applying Corollary~\ref{cor:Sarkozy_Szemeredi}
and the fact that $|J|\geq \lfloor n/k\rfloor$ we conclude that
\begin{equation*}
  \P(P'(\alpha)=0) \le
  \left(\frac{C}{\lfloor\frac{n}{k}\rfloor}\right)^{\frac{3\deg(\alpha)}{2}}.
\end{equation*}

\section{Roots off the unit circle}
\label{sec:far_roots} In this section we prove
Lemma~\ref{lem:off_circle}. Let $\alpha\neq 0$ be an algebraic
integer. We assume $|\alpha|\neq 1$ as otherwise the lemma is
trivial. We note also that the probability that $\alpha$ is a root
of $P$ is the same as the probability that $1/\alpha$ is a root of
$P$ since $P(\alpha)$ has the same distribution as $\alpha^n
P(1/\alpha)$. Thus we assume without loss of generality that
$|\alpha|>1$. Define $j_0$ as the minimal positive integer for which
\begin{equation}\label{eq:j_0_prop}
  |\alpha|^{j_0} \ge 3.
\end{equation}
Write $P(z) = P_1(z) + P_2(z)$ with
\begin{equation*}
  P_1(z):=\sum_{k=0}^{\lfloor n/j_0\rfloor} \xi_{kj_0} z^{k
  j_0}\quad\text{and}\quad P_2(z):=P(z) - P_1(z).
\end{equation*}
The assumption \eqref{eq:j_0_prop} implies that the mapping
$T:\{-1,0,1\}^{\lfloor n/j_0\rfloor+1}\to\C$ defined by
\begin{equation*}
  (a_0, \ldots, a_{\lfloor n/j_0\rfloor})\mapsto \sum_{k=0}^{\lfloor n/j_0\rfloor} a_k \alpha^{k
  j_0}
\end{equation*}
is one-to-one (similarly to \eqref{eq:ternary_expansion}). Thus, as
$P_1(\alpha)$ and $P_2(\alpha)$ are independent,
\begin{align*}
  \P(\alpha\text{ is a root of $P$}) &= \E\left[\P(\alpha\text{ is a root of
  $P$}\,|\,P_2(\alpha))\right] =\\
  &= \E\left[\P(P_1(\alpha) =
  -P_2(\alpha)\,|\,P_2(\alpha))\right]\le
  \left(\max_{x\in\{-1,0,1\}}\P(\xi_0=x)\right)^{\lfloor
  n/j_0\rfloor+1}.
\end{align*}
Finally, assumption~\eqref{eq:coefficient_condition} and the
definition of $j_0$ imply that
\begin{equation*}
  \P(\alpha\text{ is a root of $P$})< \left(\frac{1}{\sqrt{3}}\right)^{\lfloor
  n/j_0\rfloor+1}\le e^{-\frac{n\log 3}{2j_0}} = e^{-\frac{n\log 3}{2\lceil \log 3 / \log
  |\alpha|\rceil}}.
\end{equation*}
\section{Probability of double root}
\label{sec:double_root}
In this section we prove Theorem~\ref{thm:double_root}.

By definition, any root of a monic polynomial with integer
coefficients is an algebraic integer. Thus, unless all coefficients
of $P$ are zero, the equation $P(z)=0$ is satisfied only by
algebraic integers $z$. We note this explicitly for later reference,
\begin{equation}\label{eq:double_root_zero_polynomial_estimate}
  \P\left(P\text{ has a root which is not an algebraic integer}\right) = \P(\xi_0 = 0)^{n+1} < 3^{-n/2}
\end{equation}
by assumption \eqref{eq:coefficient_condition}.

Let $c$ be the constant appearing in Lemma~\ref{lem:high_degree} and
note first that this lemma implies that
\begin{equation}\label{eq:double_root_high_degree_estimate}
  \P\left(P\text{ has a double root $\alpha$ with $\deg(\alpha)\ge \frac{3\log n}{c}$}\right)\le Cn^{-3}.
\end{equation}
Thus we may restrict attention to the following set of potential
roots,
\begin{equation*}
  A:=\left\{\alpha\in \C\colon \alpha\text{ is a root of a monic polynomial with coefficients in $\{-1,0,1\}$
  and $\deg(\alpha)<\frac{3\log n}{c}$}\right\}.
\end{equation*}
We now use use another argument of Filaseta and Konyagin
\cite{FK96} to bound the cardinality of  $A$.
\begin{lemma}\label{lem:number_of_minimal_polynomials}
  There exists a constant $C>0$ such that
  \begin{equation*}
    |A| \le C^{(\log n)^2}.
  \end{equation*}
\end{lemma}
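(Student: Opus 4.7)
The plan is to parameterize each $\alpha \in A$ by its minimal polynomial $g_\alpha \in \Z[x]$ and to bound the possible minimal polynomials via Vieta's formulas.

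First I would observe that, by definition, $\alpha$ is a root of some monic polynomial $h$ with coefficients in $\{-1,0,1\}$, and since $g_\alpha$ is the minimal polynomial of $\alpha$ over $\mathbb{Q}$, we have $g_\alpha \mid h$ in $\mathbb{Q}[x]$. In particular, every root of $g_\alpha$ (i.e., every conjugate of $\alpha$) is a root of $h$. An elementary argument shows that any root $z$ of such an $h$ satisfies $|z| < 2$: if $|z|>1$ and $h(z)=0$, then
\begin{equation*}
|z|^{\deg h} \;\le\; \sum_{i<\deg h} |z|^i \;=\; \frac{|z|^{\deg h}-1}{|z|-1},
\end{equation*}
which forces $|z| < 2$.

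Next, since all $d:=\deg(\alpha)$ complex roots of $g_\alpha$ lie in $\{|z|\le 2\}$, Vieta's formulas imply that the coefficient of $z^{d-k}$ in $g_\alpha$ is bounded in absolute value by $\binom{d}{k}2^{k} \le 4^{d}$. Consequently, the number of monic $g\in\Z[x]$ of degree $d$ that can arise as some $g_\alpha$ is at most $(2\cdot 4^{d} + 1)^{d} \le C_0^{d^{2}}$ for a suitable constant $C_0$, since each of the $d$ non-leading coefficients takes integer values in $[-4^{d},4^{d}]$. Each such $g$ contributes at most $d$ elements to $A$.

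Finally, summing over $1 \le d < 3\log n/c$ gives
\begin{equation*}
|A| \;\le\; \sum_{d=1}^{\lceil 3\log n/c\rceil} d\cdot C_0^{d^{2}} \;\le\; C^{(\log n)^{2}}
\end{equation*}
for a suitable constant $C>0$. The argument is routine: the only substantive point is the elementary root bound $|z|<2$, after which the count reduces mechanically to the Vieta size bound on integer coefficients. I do not anticipate any significant obstacle.
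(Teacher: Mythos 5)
Your proposal is correct and follows essentially the same route as the paper: bound all algebraic conjugates of $\alpha\in A$ by $2$ in absolute value (the paper states this directly for roots of monic $\{-1,0,1\}$-polynomials, you give the short computation), use Vieta's formulas to bound the integer coefficients of the minimal polynomial by $4^{d}$, count the resulting integer vectors ($C_0^{d^2}$ possibilities), and sum over $d=O(\log n)$. The only difference is cosmetic: you track the factor $d$ (roots per polynomial) and the exact count $(2\cdot 4^d+1)^d$, which the paper absorbs into its constants.
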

\begin{proof}
  Let $\alpha\in A$ and denote by $C(\alpha)$ the set of its algebraic conjugates (including $\alpha$ itself). Since $\alpha$ is a root of a monic polynomial
  with coefficients in $\{-1,0,1\}$ it follows immediately that
  \begin{equation}\label{eq:conjugates_bound}
    |\beta|<2\;\;\text{ for each $\beta\in C(\alpha)$}.
  \end{equation}
  Now suppose $\deg(\alpha)=d$, let $g$ be the minimal polynomial of $\alpha$
  and denote
  \begin{equation*}
    g(z) = z^{d} + \sum_{j=0}^{d-1} a_j z^j = \prod_{\beta\in
    C(\alpha)} (z - \beta).
  \end{equation*}
  From this representation and \eqref{eq:conjugates_bound} we deduce
  that $|a_j|\le 4^{d}$ for each $j$, whence the integral vector $(a_0,\ldots,
  a_{d-1})$ has at most $4^{d^2}$ possibilities. We conclude that
  the number of $\alpha\in A$ with $\deg(\alpha) = d$ is at most
  $4^{d^2}$ and the lemma follows by summing over $d$.
\end{proof}
We continue by recalling the Mahler measure of an algebraic integer.
If $\alpha$ is an algebraic integer having minimal polynomial
\begin{equation*}
  g(z) = \prod_{\beta\in C(\alpha)}(z - \beta),
\end{equation*}
where $C(\alpha)$ is the set of algebraic conjugates of $\alpha$,
then the Mahler measure $M(\alpha)$ of $\alpha$ is
\begin{equation*}
  M(\alpha):=\prod_{\substack{\beta\in C(\alpha)\\|\beta|\ge 1}}
  |\beta|.
\end{equation*}
In particular, if $\alpha$ is an algebraic integer then
$M(\alpha)=1$ if and only if $\alpha=0$ or $|\beta|=1$ for all
$\beta\in C(\alpha)$.   Moreover, it follows from a classical theorem of
Kronecker \cite{K57} that if
$\alpha$ is an algebraic integer with $|\beta|=1$ for all $\beta\in
C(\alpha)$ then $\alpha$ is a root of unity. Finally, Lehmer's
conjecture \cite{L33} states that there exists some absolute constant $\mu>1$
such that
\begin{equation*}
 M(\alpha)=1\text{ or }M(\alpha)\ge\mu\text{ for all algebraic
  integers $\alpha$}.
\end{equation*}
We will make use of Dobrowolski's result on Lehmer's conjecture
\cite{D79} which says that
\begin{equation*}
  M(\alpha)=1\text{ or }\log(M(\alpha))\ge c'\left(\frac{\log\log(\deg(\alpha)+2)}{\log(\deg(\alpha)+2)}\right)^3\text{ for some $c'>0$ and all algebraic
  integers $\alpha$}.
\end{equation*}
We remark that earlier weaker results on Lehmer's conjecture such as those
of Blanksby and Montgomery \cite{BM71} or Stewart \cite{S78} would
also have sufficed for our purposes.

Now let $\alpha\in A$ and denote $d := \deg(\alpha)$. Assume that
$\alpha$ is neither $0$ nor a root of unity. It follows from the
preceding discussion that
\begin{equation*}
  \log(M(\alpha))\ge c'\left(\frac{\log\log(d+2)}{\log(d+2)}\right)^3
\end{equation*}
and hence, using that $e^x\geq 1+x$ for $x\geq 0$,
one concludes that
$\alpha$ has some algebraic conjugate $\beta$ satisfying
\begin{equation*}
  |\beta|\ge 1 +
  \frac{c'}{d}\left(\frac{\log\log(d+2)}{\log(d+2)}\right)^3.
\end{equation*}
Since $P(\alpha)=0$ if and only if $P(\beta)=0$ we may apply
Lemma~\ref{lem:off_circle} to deduce that
\begin{equation*}
  \P(\alpha\text{ is a root of $P$})\le e^{-\frac{n\log 3}{2\lceil \log 3 /\log
  |\beta|\rceil}}\le e^{-c''\frac{n(\log\log(d+2))^3}{d(\log(d+2))^3}}
\end{equation*}
for some $c''>0$. Putting this estimate together with
Lemma~\ref{lem:number_of_minimal_polynomials} and the fact that
$\deg(\alpha)<\frac{3\log n}{c}$ for $\alpha\in A$ yields that
\begin{equation}\label{eq:double_root_non_root_of_unity_estimate}
  \P\left(P\text{ has a root $\alpha\in A$ with $\alpha\neq 0$ and $\alpha$ not a root of
  unity}\right)\le C^{(\log n)^2}e^{-c'''\frac{n(\log\log \log n)^3}{\log n(\log
  \log n)^3}}
\end{equation}
for some $c'''>0$. It remains to consider the probability that $P$
has a root which is a root of unity. Let $\alpha$ be a root of unity
with $k$ being the minimal positive integer for which $\alpha^k = 1$
and $d := \deg(\alpha)$. By Lemma~\ref{lem:root_of_unity},
\begin{equation}\label{eq:root_of_unity_double_root_estimate}
  \P(\alpha\text{ is a double root of $P$})\le
    \left(\frac{C}{\lfloor\frac{n}{k}\rfloor}\right)^{\frac{3d}{2}}.
\end{equation}
Since the minimal polynomial of $\alpha$ is given by the $k$th cyclotomic polynomial $$\Phi_k(x):= \prod_{\substack{ 1 \le j \le k, \\ \gcd(j, k)=1}} (1- e^{ 2 \pi i j/k})$$ (see, for example, Lemma~7.6 and Theorem~7.7 of \cite{M96}), we have that
$d = \deg(\Phi_k) =  \varphi(k)$
where $\varphi$
is Euler's totient function, i.e., $\varphi(k)=|\{1\le j\le k\colon
\gcd(j,k) = 1\}|$. By standard estimates (see \cite[Theorem 2.9]{MV07}) there exists
some constant $c_1>0$ for which
\begin{equation*}
  d = \varphi(k) \ge \frac{c_1k}{\log\log (k+2)}.
\end{equation*}
Thus if $\alpha\in A$, so that in particular $\deg(\alpha) <
\frac{3\log n}{c}$, then
\begin{equation}\label{eq:root_of_unity_k_estimate}
  k \le C_1 \log n \log\log \log n
\end{equation}
for some $C_1>0$. Substituting back in
\eqref{eq:root_of_unity_double_root_estimate} yields
\begin{equation*}
  \P(\alpha\text{ is a double root of $P$})\le
    \left(\frac{C_2\log n\log\log \log n}{n}\right)^{\frac{3d}{2}}
\end{equation*}
for some $C_2>0$. In particular, since there are at most $k$ numbers
$\alpha$ for which $k$ is the minimal positive integer such that
$\alpha^k=1$ we conclude from the last two inequalities that
\begin{multline}\label{eq:double_root_root_of_unity_estimate}
  \P\left(P\text{ has a double root $\alpha\in A\setminus\{-1,1\}$ which
  is a root of
  unity}\right)\le\\
  \le (C_1\log n \log\log\log n)^2 \left(\frac{C_2\log n\log\log \log n}{n}\right)^{3} =
  o(n^{-2}).
\end{multline}
Theorem~\ref{thm:double_root} now follows by putting together
\eqref{eq:double_root_zero_polynomial_estimate},
\eqref{eq:double_root_high_degree_estimate},
\eqref{eq:double_root_non_root_of_unity_estimate} and
\eqref{eq:double_root_root_of_unity_estimate}.

\section{Asymptotics of the double root probability}
\label{sec:A_double_root} In this section we find asymptotics in
many cases for the probability that
the random polynomial $P$ has a
double root, proving Theorem~\ref{thm:double_root_asymptotics}.

We start with the proof of \eqref{eq:double_root_prob_limit}. By
Theorem~\ref{thm:double_root} we may focus on the probability that
either $-1, 0$ or $1$ are double roots of $P$. We have
\begin{equation}
   \P(0\text{ is a double root of $P$}) = \P(\xi_0 = 0)^2
\end{equation}
since $0$ is a double root of $P$ if and only if the free
coefficients of $P$ and $P'$ vanish. Thus,
\eqref{eq:double_root_prob_limit} follows by noting that the
probability that either $-1$ or $1$ are double roots of $P$ tends to
zero with $n$ by Lemma~\ref{lem:root_of_unity}.

In the rest of the section we assume \eqref{eq:no_atom_at_0} and
proceed to prove \eqref{eq:double_root_prob_asymptotics}. By
Theorem~\ref{thm:double_root} it suffices to find the asymptotics of
the probability that either $-1$ or $1$ are double roots of $P$.

We start with some simple observations. Note that
\begin{equation}\label{eq:parity_restrictions}
\begin{aligned}
  P(1) &\equiv P(-1) \equiv n+1 \bmod 2,\\
  P'(1) &\equiv P'(-1) \equiv \left\lceil\frac{n}{2}\right\rceil \bmod
  2.
\end{aligned}
\end{equation}
Thus,
\begin{equation}\label{eq:non-divisibility_conclusion}
  \P(-1\text{ or }1\text{ are double roots of $P$}) = 0\;\;\text{if
  $n+1$ is not divisible by $4$}.
\end{equation}
Together with Theorem~\ref{thm:double_root} this establishes the
case $L_n=0$ in \eqref{eq:double_root_prob_asymptotics} and
\eqref{eq:limit_constant_value}. We henceforth make the assumption
that
\begin{equation}\label{eq:divisibility_condition}
  \text{$n+1$ is divisible by $4$}.
\end{equation}
Next we note that $P(1)=0$ if and only if exactly half of the
$(\xi_j)_{0\le j\le n}$ are $1$. Thus, by standard large deviation
estimates for binomial random variables,
\begin{equation}\label{eq:non_zero_mean_estimate}
  \text{if $\E(\xi_0)\neq 0$ then }\P(P(1) = 0)\le C\exp(-cn)
\end{equation}
for some constants $C,c>0$. Additionally, it is straightforward to
check that
\begin{equation}\label{eq:1_and_minus_1_equality_in_distribution}
  \text{if $\E(\xi_0) = 0$ then $(P(1), P'(1)) \eqd (P(-1), P'(-1))$}.
\end{equation}
Lastly, since we have the equality of events
\begin{equation*}
  \{P'(1) = P'(-1) = 0\} = \left\{\sum_{k=1}^{\lceil\frac{n}{2}\rceil}
  (2k-1)\xi_{2k-1} = \sum_{k=1}^{\lfloor \frac{n}{2}\rfloor}
  2k\xi_{2k} = 0\right\},
\end{equation*}
it follows from Corollary~\ref{cor:Sarkozy_Szemeredi} that
\begin{equation}\label{eq:simultaneous_double_root}
  \P(P'(1) = P'(-1) = 0) = \P\left(\sum_{k=1}^{\lceil\frac{n}{2}\rceil}
  (2k-1)\xi_{2k-1}=0\right)\P\left(\sum_{k=1}^{\lfloor \frac{n}{2}\rfloor}
  2k\xi_{2k} = 0\right)\le \frac{C}{n^3}
\end{equation}
for some constant $C>0$. Putting together
Theorem~\ref{thm:double_root}, \eqref{eq:non_zero_mean_estimate},
\eqref{eq:1_and_minus_1_equality_in_distribution} and
\eqref{eq:simultaneous_double_root} we see that the remaining parts
of Theorem~\ref{thm:double_root_asymptotics} will follow by showing
that
\begin{equation}\label{eq:minus_1_double_root_asymptotics}
  \left|\P(-1\text{ is a double root of $P$}) - \frac{4\sqrt{3}}{\pi \var(\xi_0)n^2}\right| =
  o(n^{-2}).
\end{equation}
This asymptotics will be established via a local central limit
theorem. We rely on some ideas from \cite{KLP13}, but aim to give a
short proof tailored for our case rather than a general statement.

We wish to compare the probability distribution of $(P(-1), P'(-1))$
to the density of a Gaussian random vector with the same expectation
and covariance matrix. To this end we denote
\begin{equation}\label{eq:X_def}
  X:=(P(-1),P'(-1)) = \left(\sum_{j=0}^n \xi_j(-1)^j,\, \sum_{j=0}^n j\xi_j(-1)^{j-1}\right) = \sum_{j=0}^n (1,\, -j)\xi_j(-1)^j.
\end{equation}
A short calculation, using our standing assumption
\eqref{eq:divisibility_condition}, yields the expectation $\mu$ and
covariance matrix $\Sigma$ of $X$,
\begin{equation}\label{eq:mu_and_sigma_def}
\begin{aligned}
  &\mu=\left(0, \frac{n+1}{2}\E(\xi_0)\right),\\
  &\Sigma =\begin{pmatrix}
    \var(\xi_0)(n+1)&-\frac{\var(\xi_0)}{2}n(n+1)\\
    -\frac{\var(\xi_0)}{2}n(n+1)&\frac{\var(\xi_0)}{6}n(n+1)(2n+1)
  \end{pmatrix}.
\end{aligned}
\end{equation}
We also let $Y$ denote a Gaussian random vector in $\R^2$ having
expectation $\mu$ and covariance matrix $\Sigma$. By standard facts
regarding Gaussian random vectors, the characteristic function
$\hat{Y}:\R^2\to\C$ of $Y$ is
\begin{equation}\label{eq:Y_Fourier_transform}
  \hat{Y}(\theta)=\E e^{2\pi i\langle \theta, Y\rangle} = e^{2\pi i
  \langle \theta, \mu\rangle - 2\pi^2 \theta^t \Sigma \theta}
\end{equation}
and the density $f_Y:\R^2\to\R$ of $Y$ is
\begin{equation}\label{eq:Gaussian_density_with_Fourier transform}
  f_Y(y) = \frac{1}{2\pi\sqrt{\det(\Sigma)}}e^{-\frac{1}{2}(y -
  \mu)^t\Sigma^{-1}(y-\mu)} = \int_{\R^2} e^{-2\pi i
  \langle \theta, y\rangle}\hat{Y}(\theta)d\theta.
\end{equation}
The characteristic function $\hat{X}:\R^2\to\C$ of $X$ is also
simple to calculate, as $X$ is given in \eqref{eq:X_def} as a sum of
independent random vectors,
\begin{equation}\label{eq:Fourier_transform_of_X}
  \hat{X}(\theta)=\E e^{2\pi i \langle \theta, X\rangle} =
  \prod_{j=0}^{n}\left(p e^{2\pi i ((-1)^j\theta_1 + j(-1)^{j-1}\theta_2)}
  + (1-p) e^{-2\pi i ((-1)^j\theta_1 + j(-1)^{j-1}\theta_2)}\right).
\end{equation}
where we denote $\theta = (\theta_1, \theta_2)$ and let
\begin{equation*}
  p:=\P(\xi_0 = 1).
\end{equation*}
In addition, we note that by the parity restrictions
\eqref{eq:parity_restrictions} the values of $X$ lie in the lattice
$2\Z^2$ (again, using our standing assumption
\eqref{eq:divisibility_condition}). Therefore we have the
representation
\begin{equation}\label{eq:double_root_as_Fourier_transform_integral}
  \P(-1\text{ is a double root of $P$}) = \P(X = (0,0)) = 4\int_{\left[-\frac{1}{4},\frac{1}{4}\right]^2}
  \hat{X}(\theta)d\theta.
\end{equation}

The following proposition relates $\hat{X}$ to $\hat{Y}$ near zero
and shows that both are small away from zero.
\begin{proposition}
    \label{prop:6.2}
  Denote
  \begin{equation*}
    D:=\left[-n^{-5/12},
    n^{-5/12}\right]\times\left[-n^{-17/12},n^{-17/12}\right].
  \end{equation*}
  There exists an absolute constant $C>0$ and constants $C_p,c_p>0$ depending only on $p$ such that
  \begin{enumerate}
    \item\label{it:X_and_Y_near_zero} For every $\theta\in D$ we have $|\hat{X}(\theta) -
    \hat{Y}(\theta)|\le Cn^{-1/4}$.
    \item\label{it:X_far_from_zero} For every $\theta\in[-1/4,
    1/4]^2\setminus D$ we have $|\hat{X}(\theta)| \le
    C\exp\left(-c_pn^{1/6}\right)$.
    \item\label{it:Y_far_from_zero} $\int_{\R^2\setminus D}
    |\hat{Y}(\theta)|
    d\theta \le C_p\exp\left(-c_pn^{1/6}\right)$.
  \end{enumerate}
\end{proposition}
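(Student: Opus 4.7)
The plan is to prove the three assertions separately: a logarithmic Taylor-expansion argument for Part \ref{it:X_and_Y_near_zero}, a cancellation/Taylor hybrid for Part \ref{it:X_far_from_zero}, and a direct change-of-variables computation for Part \ref{it:Y_far_from_zero}.

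For Part \ref{it:X_and_Y_near_zero}, I would take logarithms of $\hat{X}(\theta)$ factor by factor. Writing $u_j = 2\pi(-1)^j(\theta_1 - j\theta_2)$, each factor of $\hat{X}(\theta)$ equals $\phi(u_j) := p e^{iu_j} + (1-p) e^{-iu_j}$, and a direct Taylor expansion yields
\[
\log \phi(u) = i(2p-1) u - 2p(1-p) u^2 + O(|u|^3).
\]
Since $2p-1 = \E(\xi_0)$ and $4p(1-p) = \var(\xi_0)$, summing the linear and quadratic terms over $j$ reproduces exactly $2\pi i \langle \theta, \mu\rangle - 2\pi^2 \theta^t \Sigma \theta = \log \hat{Y}(\theta)$, using $u_j^2 = 4\pi^2(\theta_1 - j\theta_2)^2$ and the explicit form of $\Sigma$ in \eqref{eq:mu_and_sigma_def}. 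The cubic remainder is controlled by
\[
\sum_{j=0}^n |u_j|^3 \le C\sum_{j=0}^n (|\theta_1|+j|\theta_2|)^3 \le C\bigl(n|\theta_1|^3 + n^4|\theta_2|^3\bigr),
\]
which is $O(n^{-1/4})$ on $D$. The desired bound $|\hat{X}(\theta)-\hat{Y}(\theta)|\le Cn^{-1/4}$ then follows from $\hat{X} = \hat{Y}\,e^{R}$ with $|R|=O(n^{-1/4})$, together with $|\hat{Y}|\le 1$ and $|e^R-1|\le 2|R|$ for small $R$.

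For Part \ref{it:Y_far_from_zero}, I would diagonalize the quadratic form. Completing the square in \eqref{eq:mu_and_sigma_def} gives
\[
\theta^t\Sigma\theta = \var(\xi_0)\Bigl[(n+1)\bigl(\theta_1 - \tfrac{n}{2}\theta_2\bigr)^2 + \tfrac{n(n+1)(n+2)}{12}\theta_2^2\Bigr],
\]
so the linear change of coordinates $\eta_1 := \sqrt{n+1}(\theta_1 - n\theta_2/2)$, $\eta_2 := \sqrt{n(n+1)(n+2)/12}\,\theta_2$, whose Jacobian is of order $n^2$, turns $|\hat{Y}(\theta)|$ into $\exp(-2\pi^2\var(\xi_0)(\eta_1^2+\eta_2^2))$. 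A direct computation shows that the image of $D$ under this map is the parallelogram $\{|\eta_1+c\eta_2|\le c_1 n^{1/12},\ |\eta_2|\le c_2 n^{1/12}\}$ with $c=\sqrt{3}+o(1)$, which contains a disk around the origin of radius $\gtrsim n^{1/12}$. Integrating the Gaussian outside this disk and absorbing the $n^{-2}$ Jacobian factor yields the claimed $C_p\exp(-c_p n^{1/6})$ bound.

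Part \ref{it:X_far_from_zero} is the most delicate and is where I expect the main difficulty. Using $|pe^{iu}+(1-p)e^{-iu}|^2 = 1 - 4p(1-p)\sin^2 u$ and $1-t\le e^{-t}$, it suffices to show
\[
f(\theta) := \sum_{j=0}^n \sin^2\bigl(2\pi(\theta_1 - j\theta_2)\bigr) \ge c_p n^{1/6}
\qquad \text{for all } \theta\in[-\tfrac14,\tfrac14]^2\setminus D.
\]
I would split by the size of $|\theta_2|$. When $|\theta_2|\ge 1/(2n)$, the identity
\[
f(\theta) = \tfrac{n+1}{2} - \tfrac12\operatorname{Re}\!\Bigl[e^{4\pi i\theta_1}\sum_{j=0}^n e^{-4\pi i j\theta_2}\Bigr]
\]
combined with the geometric-sum estimate $\bigl|\sum_{j=0}^n e^{-4\pi i j\theta_2}\bigr|\le 1/|\sin(2\pi\theta_2)|\le 1/(4|\theta_2|)$ gives $f(\theta)\ge cn$. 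When $|\theta_2|<1/(2n)$, I would restrict the sum defining $f$ to $j\in[0,\lfloor n/8\rfloor]$, on which $|\theta_1 - j\theta_2|\le 5/16$, so $\sin^2(2\pi(\theta_1-j\theta_2))\ge c(\theta_1-j\theta_2)^2$ by comparison on the compact interval $[-5/16,5/16]$. The problem then reduces to a lower bound on the truncated quadratic form $\sum_{j=0}^{\lfloor n/8\rfloor}(\theta_1-j\theta_2)^2$; completing the square as in Part \ref{it:Y_far_from_zero} shows this is at least $c n^3\theta_2^2$ when $|\theta_2|\ge n^{-17/12}$, and at least $c n (\theta_1 - n\theta_2/16)^2 \ge c n\cdot n^{-5/6}$ when $|\theta_1|>n^{-5/12}$ and $|\theta_2|\le n^{-17/12}$; either way we harvest the required $c n^{1/6}$. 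The main obstacle is the case analysis itself, since neither method is tight near $|\theta_2|\sim 1/n$, so the thresholds must be chosen so that the two regimes jointly cover $[-\tfrac14,\tfrac14]^2\setminus D$ without gaps; the choice $|\theta_2|=1/(2n)$ as the meeting point does the job.
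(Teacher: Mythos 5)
Your proposal is correct, and while Parts \ref{it:X_and_Y_near_zero} and \ref{it:Y_far_from_zero} essentially coincide with the paper's argument (the paper expands $\log\bigl(pe^{ix}+(1-p)e^{-ix}\bigr)$ with an $O(|x|^3)$ remainder exactly as you do, and for the Gaussian tail it dominates $\theta^t\Sigma\theta$ by a diagonal form via a cross-term inequality rather than your exact diagonalization $\var(\xi_0)\bigl[(n+1)(\theta_1-\tfrac n2\theta_2)^2+\tfrac{n(n+1)(n+2)}{12}\theta_2^2\bigr]$ --- a cosmetic difference), your treatment of Part \ref{it:X_far_from_zero} is genuinely different from the paper's. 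The paper bounds each factor of \eqref{eq:Fourier_transform_of_X} individually through the distance of $x_j=2((-1)^j\theta_1+j(-1)^{j-1}\theta_2)$ to the nearest integer: it introduces the set $J$ of indices with $d(x_j,\Z)\le\tfrac18 n^{-5/12}$, shows that $|J|\le 9(n+1)/10$ already forces the product to be $\exp(-c_pn^{1/6})$ small, and then rules out $|J|\ge 9(n+1)/10$ for $\theta\notin D$ by an arithmetic pigeonhole argument (finding $j_1,j_2,j_1+j_2\in J$ to pin down $\theta_1$, then a counting argument on $\{j: d(2j\theta_2,\Z)>\tfrac12 n^{-5/12}\}$ to pin down $\theta_2$). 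You instead aggregate all factors at once via $|pe^{iu}+(1-p)e^{-iu}|^2=1-4p(1-p)\sin^2u$ and lower-bound $f(\theta)=\sum_j\sin^2\bigl(2\pi(\theta_1-j\theta_2)\bigr)$: for $|\theta_2|\ge\tfrac1{2n}$ the exact geometric-sum identity gives $f\ge cn$ (the bound $\bigl|\sum_j e^{-4\pi ij\theta_2}\bigr|\le 1/|\sin(2\pi\theta_2)|\le 1/(4|\theta_2|)$ is valid since $|\theta_2|\le\tfrac14$ keeps the phase in $[-\pi/2,\pi/2]$), while for $|\theta_2|<\tfrac1{2n}$ the restriction to $j\le\lfloor n/8\rfloor$ keeps $|\theta_1-j\theta_2|\le 5/16$ so that $\sin^2(2\pi x)\asymp x^2$, and completing the square in the truncated quadratic form yields $\gtrsim n^3\theta_2^2$ or $\gtrsim n(\theta_1-\tfrac{m}{2}\theta_2)^2$, each at least $cn^{1/6}$ off $D$; the two regimes do cover $[-\tfrac14,\tfrac14]^2\setminus D$ as you claim. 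Your route buys a purely analytic, quantitative argument that dispenses with the near-integer counting and pigeonhole step (the most delicate point of the paper's proof, and the one the authors note was corrected after refereeing), at the cost of an explicit case split in $|\theta_2|$; the paper's argument, in exchange, never needs the exact geometric summation and treats the whole box $[-\tfrac14,\tfrac14]^2\setminus D$ through one combinatorial dichotomy on $|J|$.
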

\begin{proof}
  We start with the proof of part~\ref{it:X_and_Y_near_zero}. Define a
  function $f:\R\to\C$ by
  \begin{equation*}
    f(x):=pe^{ix}+(1-p)e^{-ix}.
  \end{equation*}
  A simple calculation using the Taylor expansion of the logarithm
  (see \cite[Claim 4.10]{KLP13} for a similar claim) shows that for $0\le p\le 1$ and
  $|x|\le \frac{\pi}{4}$ we have
  \begin{equation*}
    f(x) = e^{(2p-1)ix - 2p(1-p)x^2+\delta(p,x)}
  \end{equation*}
  where $|\delta(p,x)| \le C' |x|^3$ for some absolute constant
  $C'>0$.  
  Plugging this into \eqref{eq:Fourier_transform_of_X} for
  $\theta\in D$ yields
  \begin{align*}
    \hat{X}(\theta) &= \exp\left(2\pi(2p-1) i\sum_{j=0}^n((-1)^j\theta_1 + j(-1)^{j-1}\theta_2) - 8\pi^2 p(1-p)\sum_{j=0}^n ((-1)^j\theta_1 + j(-1)^{j-1}\theta_2)^2 +
    \delta'\right)\\
    &=\exp\left(2\pi i\langle\theta, \mu\rangle -
    2\pi^2\theta^t\Sigma\theta +
    \delta'\right)=\hat{Y}(\theta)e^{\delta'}
  \end{align*}
  where the error term $\delta' = \sum_{j=0}^n \delta(p, 2\pi((-1)^j\theta_1 + j(-1)^{j-1}\theta_2))$ satisfies
  \begin{equation*}
    |\delta'|\le C''\sum_{j=0}^n |(-1)^j\theta_1 +
    j(-1)^{j-1}\theta_2|^3 \le C'''n^{-1/4}
  \end{equation*}
  and $C'',C'''>0$ denote absolute constants. This finishes the
  proof of part~\ref{it:X_and_Y_near_zero}.

  We now continue with the proof of part~\ref{it:Y_far_from_zero}. It is useful to proceed by finding a diagonal matrix which
  $\Sigma$ dominates. Since, for all $(\theta_1, \theta_2) \in\R^2$,
  \begin{equation*}
    n\theta_1\theta_2 =
    \left(\frac{\sqrt{7}}{2}\theta_1\right)\left(\frac{2n}{\sqrt{7}}\theta_2\right)\le\frac{1}{2}\left(\frac{7}{4}\theta_1^2
    + \frac{4n^2}{7}\theta_2^2\right) \le \frac{7}{8}\theta_1^2 +
    \frac{1}{7}n(2n+1)\theta_2^2
  \end{equation*}
  we conclude that
  \begin{equation*}
    \theta^t\Sigma\theta = \var(\xi_0)(n+1)\left(\theta_1^2 +
    \frac{1}{6}n(2n+1)\theta_2^2 - n\theta_1\theta_2\right) \ge
    \var(\xi_0)(n+1)\left(\frac{1}{8}\theta_1^2 + \frac{n(2n+1)}{42}
    \theta_2^2\right).
  \end{equation*}
  Thus, by \eqref{eq:Y_Fourier_transform}, we have
  \begin{equation*}
    \int_{\R^2\setminus D} |\hat{Y}(\theta)|d\theta = \int_{\R^2\setminus D} e^{-2\pi^2 \theta^t \Sigma
    \theta}d\theta\le \int_{\R^2\setminus D} e^{-2\pi^2\var(\xi_0)(n+1)\left(\frac{1}{8}\theta_1^2 + \frac{n(2n+1)}{42}
    \theta_2^2\right)}d\theta.
  \end{equation*}
  Now, letting $G_1, G_2$ be independent centered normal random variables with
  $\var(G_1) = \sigma_1^2:=\frac{2}{\pi^2\var(\xi_0)(n+1)}$ and $\var(G_2)
  = \sigma_2^2:=\frac{21}{2\pi^2\var(\xi_0)n(n+1)(2n+1)}$ we have that
  \begin{align*}
    \int_{\R^2\setminus D} |\hat{Y}(\theta)|d\theta \le
    2\pi\sigma_1\sigma_2\P((G_1,G_2)\notin D) &\le
    2\pi\sigma_1\sigma_2\left(\P(|G_1|> n^{-5/12}) + \P(|G_2| >
    n^{-17/12})\right)\\
    &\le \frac{C}{\var(\xi_0)^2}e^{-c\var(\xi_0)n^{1/6}}
  \end{align*}
  for some absolute constants $C,c>0$. This finishes the
  proof of part~\ref{it:Y_far_from_zero}.

  Finally we turn to part~\ref{it:X_far_from_zero}. By taking the constant $C$ sufficiently large we may assume that $n$ is large. Fix $\theta\in\left[-\frac{1}{4},\frac{1}{4}\right]^2$. Write
  \begin{equation*}
    x_j := 2((-1)^j\theta_1 + j(-1)^{j-1}\theta_2),\quad 0\le
    j\le n.
  \end{equation*}
  For a real number $x$, denote by $d(x,\Z)$ its distance to
  the nearest integer. Let
  \begin{equation*}
    J = J(\theta) :=\left\{0\le j\le n\colon d(x_j,\Z)\le
    \frac{1}{8}n^{-5/12}\right\}.
  \end{equation*}
  Using \eqref{eq:Fourier_transform_of_X}, if $|J|\le 9(n+1)/10$ then
  \begin{align*}
    |\hat{X}(\theta)| &=
  \prod_{j=0}^{n}\left|p e^{2\pi i x_j}
  + (1-p)\right| = \prod_{j=0}^n \sqrt{1 - 2p(1-p)(1-\cos(2\pi x_j))} \\
  &\le \left(1 - 2p(1-p)\left(1-\cos\left(\frac{\pi}{4} n^{-5/12}\right)\right)\right)^{\frac{n+1 - |J|}{2}} \le \left(1 - 20c_p n^{-5/6}\right)^{\frac{n+1}{20}}\le \exp\left(-c_pn^{1/6}\right).
  \end{align*}
  for some constant $c_p>0$ depending only on $p$.
  Hence it suffices to show that if
  \begin{equation}\label{eq:J_large}
    |J|\ge 9(n+1)/10
  \end{equation}
  then $\theta\in D$.

  Assume \eqref{eq:J_large}. We claim
  that there necessarily exist $j_1, j_2$ such that $j_1, j_2, j_1 +
  j_2\in J$. Indeed, we may take $j_1:=\min J\le \frac{n+1}{10}$ and
  we then have $J\cap(j_1 + J)\neq \emptyset$ by \eqref{eq:J_large} and the pigeonhole
  principle since both $J$ and $j_1 +
  J$ are contained in $\left[0,\frac{n+1}{10}+n\right]$.
  Thus,
   \begin{align}\label{eq:theta_1_integer_distance}
    d(2\theta_1, \Z) &= d((-1)^{j_1+j_2-1}x_{j_1+j_2} + (-1)^{j_1}x_{j_1} + (-1)^{j_2}x_{j_2}), \Z) \notag \\
    &\le
    d(x_{j_1+j_2}, \Z) + d(x_{j_1}, \Z) + d(x_{j_2}, \Z) \le \frac{3}{8}n^{-5/12},
  \end{align}
  whence, as $|\theta_1|\le \frac{1}{4}$,
  \begin{equation*}
    |\theta_1| = \frac{1}{2}d(2\theta_1, \Z) \le \frac{3}{16}n^{-5/12}.
  \end{equation*}
  Now, if $|\theta_2|\le n^{-17/12}$ then $\theta\in D$ and we are
  done. Assume, in order to obtain a contradiction, that
  $|\theta_2|>n^{-17/12}$.

  Let $I:=\{0\le j\le
  n\colon d(2j\theta_2, \Z)>\frac{1}{2}n^{-5/12}\}$. We claim that $|I|\ge n/3$. To see this let $k$ be the minimal positive
  integer for which $2k|\theta_2|>n^{-5/12}$. Since $|\theta_2|\le
1/4$ it follows that $2k|\theta_2|\le 1/2$. Thus,
  if $j\not\in I$ and $j\leq n-k$ then necessarily $j+k\in I$.
  In addition, $k\le \frac{1}{2n^{5/12}|\theta_2|}+1< n/2+1$.
In particular, $|I|\geq k$ which shows the claim when $k\geq n/3$.
Otherwise, assume $k<n/3$ and define $T:=\{j\in
[0,n-k]\cap\Z:\lfloor j/k\rfloor \, \mbox{\rm is even}\}$. We have
that $T$ and $T+k$ are disjoint subsets of $\{0,\ldots,n\}$ and for
each $j\in T$, either $j$ or $j+k$ belong to $I$. Hence $|I|\geq
|T|\geq (n-k+1)/2> n/3$, as claimed.


Now the assumption \eqref{eq:J_large} and the above claim imply that
  there exists some $j_3\in J$ for which $d(2j_3\theta_2, \Z)>
  \frac{1}{2}n^{-5/12}$, whence by \eqref{eq:theta_1_integer_distance},
  $d(x_{j_3}, \Z) > \frac{1}{8}n^{-5/12}$, contradicting the fact that $j_3\in J$.
\end{proof}

The asymptotics \eqref{eq:minus_1_double_root_asymptotics} are an
immediate consequence of Proposition \ref{prop:6.2}. Indeed, by
\eqref{eq:Gaussian_density_with_Fourier transform} and
\eqref{eq:double_root_as_Fourier_transform_integral}, and the proposition,
\begin{multline*}
  |\P(-1\text{ is a double  root of $P$}) - 4f_Y((0,0))| = 4\left|\int_{\left[-\frac{1}{4},\frac{1}{4}\right]^2}
  \hat{X}(\theta)d\theta - \int_{\R^2} \hat{Y}(\theta)d\theta\right| \\
  \le 4\left(\int_{D} |\hat{X}(\theta) - \hat{Y}(\theta)|d\theta + \int_{\left[-\frac{1}{4},\frac{1}{4}\right]^2\setminus D} |\hat{X}(\theta)|d\theta +
  \int_{\R^{2}\setminus D}
  |\hat{Y}(\theta)|d\theta\right) \\
  \le 4Cn^{-1/4}\area(D) + C\exp\left(-c_pn^{1/6}\right) +
  \frac{4C}{\var(\xi_0)^2}e^{-c\var(\xi_0)n^{1/6}} = o(n^{-2}).
\end{multline*}
In addition, by \eqref{eq:Gaussian_density_with_Fourier transform}
and \eqref{eq:mu_and_sigma_def} we have
\begin{align*}
  f_Y((0,0)) &=
  \frac{1}{2\pi\sqrt{\det(\Sigma)}}e^{-\frac{1}{2}\mu^t\Sigma^{-1}\mu}
  =
  \frac{\sqrt{12}}{2\pi\var(\xi_0)(n+1)\sqrt{n(n+2)}}e^{-\frac{3(n+1)^2}{2n(n^2+3n+2)}}
  =\\
  &=\frac{\sqrt{3}}{4\pi p(1-p)n^2} + o(n^{-2}).
\end{align*}
This finishes the proof of
\eqref{eq:minus_1_double_root_asymptotics} and completes the proof
of Theorem~\ref{thm:double_root_asymptotics}.
\qed

\section{Open questions}
We conclude the paper by listing down several open questions.
\begin{enumerate}
\item As mentioned in the introduction, we do not know if the assumption \eqref{eq:coefficient_condition} or any similar condition is
necessary for Theorem~\ref{thm:double_root} to hold. Recall that the
assumption enters into the proof mainly through
Claim \ref{clm:integer_divisibility} which, in turn, is used to
obtain the crucial Lemma~\ref{lem:high_degree}.

\noindent
{\bf Remark.} Mei--Chu Chang \cite{Ch} has kindly pointed out to the authors that for Claim \ref{clm:integer_divisibility}
to hold, in Assumption
\eqref{eq:coefficient_condition} the constant $1/\sqrt{3}=0.5774\ldots$  can be replaced by the supremum of $\rho$s so that there exists $q\in (1,\infty)$ such that $3^{(q-1)/2q}<\rho^q+(1-\rho)^q$, leading to the value $0.7615\ldots$. This still leaves open the question of whether \textit{any} assumption of the type  \eqref{eq:coefficient_condition}
is needed for Theorem~\ref{thm:double_root} to hold.

\item It is natural to try and extend Theorem~\ref{thm:double_root}
to more general coefficient distributions. This would require a
non-trivial modification of our approach as we relied in several
places on the fact that the potential roots of our random polynomial
are algebraic integers rather than the more general algebraic
numbers. A significant issue is to deal with potential roots of high
degree, providing an analogue of Lemma~\ref{lem:high_degree}.

\item  The following question does not involve any probability. Are there examples of  Littlewood polynomials with at least one non-cyclotomic double root? The same question had been asked by  Odlyzko and Poonen \cite{OP93}
    for polynomials with $0/1$ coefficients with the constant term equal
    to one. That question was later answered by Mossinghoff \cite{M03}
    who found  examples of several such polynomials
    with non-cyclotomic repeated roots.

\item  Another interesting question is
    to  bound  the probability that a random Littlewood polynomial is reducible. This is somewhat related to our original question regarding double roots - note that the probability of having a double root is dominated by the probability of being reducible. But handling irreducibility  seems to be much harder. To the best of our knowledge, it is open whether this probability goes to zero as $n$ increases. See
    the thread \cite{MO09}  for some partial results on this question.
\end{enumerate}

\noindent
{\bf Acknowledgments} We thank an anonymous referee for spotting an error in
our original proof of Proposition \ref{prop:6.2}, and   Mei--Chu Chang
for correspondence \cite{Ch} concerning Assumption
\eqref{eq:coefficient_condition}.


\begin{thebibliography}{999999}
    \bibitem[A78]{A78} L.V. Ahlfors, Complex analysis (third edition), McGraw-Hill Book Co., New York (1978).
    \bibitem[BM71]{BM71}
P. E. Blanksby\ and\ H. L. Montgomery, Algebraic integers near the
unit circle, Acta Arith. {\bf 18} (1971), 355--369.
\bibitem[Ch14]{Ch} M.-C. Chang, private communication (October 27, 2014).
\bibitem[D79]{D79}
E. Dobrowolski, On a question of Lehmer and the number of
irreducible factors of a polynomial, Acta Arith. {\bf 34} (1979),
no.~4, 391--401.
\bibitem[FK96]{FK96}
M. Filaseta\ and\ S. Konyagin, Squarefree values of polynomials all
of whose coefficients are $0$ and $1$, Acta Arith. {\bf 74} (1996),
no.~3, 191--205.


\bibitem[FL99]{FL99}
G. Freiman\ and\ S. Litsyn, Asymptotically exact bounds on the size
of high-order spectral-null codes, IEEE Trans. Inform. Theory {\bf
45} (1999), no.~6, 1798--1807.

\bibitem[K57]{K57}
L. Kronecker, Zwei s\"atse \"uber Gleichungen mit ganzzahligen
Coefficienten, J. Reine Angew. Math. 53 (1857), 173--175. See also
Werke. Vol. 1, 103--108, Chelsea Publishing Co., New York, 1968.

\bibitem[KLP13]{KLP13}
G. Kuperberg, Sh. Lovett, and R. Peled, Probabilistic existence of
regular combinatorial structures, arXiv:1302.4295, also in Proc.
44-th ACM Symp. Theory of Computing (STOC), New York, May 2012, pp.
1091--1106.

\bibitem[KZ13]{KZ13}
G. Kozma\ and\ O. Zeitouni, On common roots of random {B}ernoulli polynomials, Int. Math. Res. Not. {\bf 18} (2013), 4334--4347.

\bibitem[L33]{L33}  D. H.  Lehmer, Factorization of certain cyclotomic functions, Ann. of Math. (2) {\bf 34} (1933), no.~3, 461--479.






\bibitem[M96]{M96} P. Morandi, Field and {G}alois theory. Graduate Texts in Mathematics, vol.~167 (1996), Springer-Verlag, New York.


\bibitem[M03]{M03}
M.J. Mossinghoff, Polynomials with restricted coefficients and prescribed
              noncyclotomic factors, LMS J. Comput. Math. {\bf 6} (2003), 314--325.

              \bibitem[MO09]{MO09} Mathoverflow discussion, http://mathoverflow.net/questions/7969/irreducible-polynomials-with-constrained-coefficients.




\bibitem[MV07]{MV07} H.L. Montgomery\ and\ R.C. Vaughan, Multiplicative number theory. {I}. Classical theory, Cambridge Studies in Advanced Mathematics, vol.~ 97 (2007), Cambridge University Press.






      \bibitem[OP93]{OP93}
A.M. Odlyzko and B. Poonen, Zeros of polynomials with {$0,1$} coefficients, Enseign. Math. (2). {\bf 39} (1993), no.~3-4, 317--348.



\bibitem[SS65]{SS65}
A. S\'ark\"ozi\ and\ E. Szemer\'edi, \"Uber ein Problem von Erd\H os
und Moser, Acta Arith. {\bf 11} (1965), 205--208.
\bibitem[S78]{S78}
C. L. Stewart, Algebraic integers whose conjugates lie near the unit
circle, Bull. Soc. Math. France {\bf 106} (1978), no.~2, 169--176.
\bibitem[SN86]{SR} N. R. Saxena and J. P. Robinson,
    Accumulator compression testing, IEEE Trans. Comp. {\bf C-35} (1986),
    317--321.


\end{thebibliography}
\end{document}